\documentclass[12pt]{amsart}

%% 16E05   18G25

\usepackage{amsmath}
\usepackage{amsfonts}
\usepackage{amsthm}

\usepackage[cmtip,arrow]{xy}
\usepackage{pb-diagram,pb-xy}

\newtheorem{theorem}{Theorem}
\newtheorem{corollary}{Corollary}
\newtheorem{lemma}{Lemma}
\newtheorem{proposition}{Proposition}
\newtheorem{definition}{Definition}
\newtheorem{example}{Example}

%Extension

%Extension with a wide hat

%\renewcommand{\baselinestretch}{1.5}

%for picture environment
\setlength{\unitlength}{1mm}
\setlength{\parskip}{2mm}
\parindent=0in
%two letters in column

%\newcommand{\overlap}[2]{\buildrel #1 \over #2}
%\newcommand{\rlim}{\buildrel {\lower4pt\hbox{lim}} \over {\lower8pt
%\hbox{$\rightarrow$}}}

\begin{document}
\title{Gorenstein injective precovers, covers, and envelopes}
\author{E. Enochs and S. Estrada and A. Iacob}
\thanks{2010 {\it Mathematics Subject Classification}. 18G25, 18G35, 13D02.}

\begin{abstract}
We give a sufficient condition for the class of Gorenstein injective modules be precovering: if $R$ is right noetherian and if the class of Gorenstein injective modules, $\mathcal{GI}$, is closed under filtrations, then $\mathcal{GI}$ is precovering in $R-Mod$. The converse is also true when we assume that $\mathcal{GI}$ is covering.
 We extend our results to the category of complexes. We prove that if the class of Gorenstein injective modules is closed under filtrations then the class of Gorenstein injective complexes is precovering in $Ch(R)$. We also give a sufficient condition for the existence of Gorenstein injective covers. We prove that if the ring $R$ is commutative noetherian and such that the character modules of Gorenstein injective modules are Gorenstein flat, then the class of Gorenstein injective complexes is covering. And we prove that over such rings every complex also has a Gorenstein injective envelope. In particular this is the case when the ring is commutative noetherian with a dualizing complex.\\
The second part of the paper deals with Gorenstein projective and flat complexes. We prove that over commutative noetherian rings of finite Krull dimension every complex of $R$-modules has a \textbf{special} Gorenstein projective precover.
\end{abstract}

\maketitle

\section{Introduction}
The Gorenstein injective modules were introduced by Enochs and Jenda (in \cite{enochs:95:gorenstein}) as a generalization of the injective modules.
Together with the Gorenstein projective and Gorenstein flat modules, they are the key ingredients of Gorenstein homological algebra. This is the reason why the existence of the Gorenstein resolutions has been studied extensively in the past years.\\
We consider the existence of Gorenstein injective precovers and covers.\\
 It is known that a precovering class that is closed under direct summands is
also closed under arbitrary direct sums. So when a ring $R$ is such that every
$R$-module has a Gorenstein injective precover, every direct sum of Gorenstein
injective modules is still Gorenstein injective. By
\cite{christensen:11:beyond} Proposition 3.15, such a ring $R$ must be
noetherian. It is not known whether the converse holds.\\
We give a sufficient condition for the class $\mathcal{GI}$ of Gorenstein injective modules be precovering: we prove that if $R$ is right noetherian and if the class of Gorenstein injective modules is closed under filtrations, then $\mathcal{GI}$ is precovering in $R-Mod$. The converse is also true when we assume the existence of special Gorenstein injective precovers. In particular, this is the case when the class $\mathcal{GI}$ is covering.\\
%In fact, with more hypotheses on the ring, the existence of Gorenstein injective covers is equivalent to the class of Gorenstein injective modules being closed under taking direct limits. We proved in \cite{enochs:12:gorenstein.injective.covers}  that this is the case when $R$ is commutative noetherian of finite Krull dimension.\\
We extend our results to the category $Ch(R)$ of complexes of $R$-modules over a two sided noetherian ring $R$. We prove that when $\mathcal{GI}$ is closed under filtrations the class of Gorenstein injective complexes is precovering.\\
We also show that when $\mathcal{GI}$  is closed under direct limits (and so $\mathcal{GI}$  is covering in $R-Mod$), the class of Gorenstein injective complexes is covering in $Ch(R)$. We prove that this is the case when $R$ is commutative noetherian and such that for every Gorenstein injective module $M$, its character module $M^+$ is Gorenstein flat.\\
Then we consider the existence of Gorenstein injective envelopes for complexes. We prove (Theorem 5) that if $R$ is a commutative noetherian ring with the property that the character modules of Gorenstein injectives are Gorenstein flat, then every complex has a Gorenstein injective envelope. In particular, this is the case when $R$ is commutative noetherian with a dualizing complex.

% In particular, this is true in the case when $R$ is commutative noetherian and has a dualizing complex.\\
%Theorem 3 shows that when the ring $R$ is commutative noetherian of finite Krull dimension the converse is also true. In this case the following are equivalent:\\
%1. The class of Gorenstein injective complexes is covering;\\
%2. The class of Gorenstein injective $R$-modules is covering in $R-Mod$.\\
%3. The class of Gorenstein injective modules is closed under taking direct limits.\\
%4. The character modules of Gorenstein injective modules are Gorenstein flat.\\
%5. The class of Gorenstein injective complexes is closed under taking direct limits.

As already noted, the Gorenstein injective, projective and flat modules are the key elements of Gorenstein homological algebra. Enochs and L\'opez-Ramos proved the existence of Gorenstein flat covers over coherent rings. And J{\o}rgensen showed the existence of Gorenstein projective precovers over commutative noetherian rings with dualizing complexes. More recently, Murfet and Salarian extended his result to commutative noetherian rings of finite Krull dimension.\\
%It seems quite likely that there is a version of Gorenstein homological algebra in the category of complexes.
We extend some of the results on the existence of Gorenstein projective and Gorenstein flat (pre)covers to the category of complexes of $R$-modules over  noetherian rings. We show the existence of Gorenstein flat covers over two sided noetherian rings. And we prove the existence of \textbf{special} Gorenstein projective precovers over commutative noetherian rings of finite Krull dimension. This apparently "slightly" variation is crucial from a homotopical point of view, since it allows to define Gorenstein projective cofibrant replacements of modules in the category of unbounded complexes.

\vspace{7mm}

\section{ Gorenstein injective precovers and covers for modules and for complexes}

Throughout the paper $R$ will denote an associative ring with 1. By $R$-module we mean left $R$-module.\\
We recall that a module $G$ is Gorenstein injective if there is an exact and $Hom
(Inj, -)$ exact complex $ \ldots \rightarrow E_1 \rightarrow E_0
\rightarrow E_{-1} \rightarrow E_{-2}\rightarrow \ldots $ of
injective modules such that $G = Ker(E_0 \rightarrow E_{-1})$.

The notions of precover and cover, preenvelope and envelope with
respect to a class of modules $\mathcal{C}$ were introduced by
Enochs in \cite{enochs:81:existence}. We are interested here in
Gorenstein injective precovers and covers.

\begin{definition}
 A morphism $\phi: E\rightarrow X$ is a
Gorenstein injective precover of $X$ if $E$ is Gorenstein
injective and if $Hom(F, E) \rightarrow Hom(F, X)$ is surjective
for all Gorenstein injective modules $F$. If moreover, any $f:
E\rightarrow E$ such that $\phi \circ f=\phi$ is an automorphism of
$E$ then $\phi: E\rightarrow X$ is called a Gorenstein injective
cover of $X$.
\end{definition}

Gorenstein injective preenvelopes and envelopes are defined dually.

Over Gorenstein rings the existence of Gorenstein injective covers is known
(see for instance, \cite[Theorem 11.1.3]{enochs:00:relative}). We give
sufficient conditions in order for the existence of Gorenstein injective
precovers and covers over noetherian rings.\\

%We start by recalling the following definition:\\

%\begin{definition}
%Let $R$ be a ring, $\kappa$ an infinite cardinal, and $\mathcal{A}$
%a class of $R$-modules. $\mathcal{A}$ is $\kappa$-Kaplansky if for
%each $0 \neq A \in \mathcal{A}$ and each $\kappa$-generated
%submodule $B \subseteq A$ there exists a $\kappa$-presentable
%submodule $C \in \mathcal{A}$ such that $B \subseteq C \subseteq A$
%and $A/C \in \mathcal{A}$.\\
%$\mathcal{A}$ is Kaplansky if it is $\kappa$-Kaplansky for some
%cardinal $\kappa$.
%\end{definition}

We recall the following\\

\begin{definition}  A direct system of modules ($X_\alpha|\alpha \le \lambda$) is said to be
continuous if $X_0 = 0$ and if for each limit ordinal $\beta \le
\lambda$ we have $X_\beta = \underrightarrow{lim} X_\alpha$ with the
limit over the $\alpha < \beta$. The direct system $(X_\alpha|
\alpha \le \lambda$) is said to be a system of monomorphisms if all
the morphisms in the system are monomorphisms.
\end{definition}

If ($X_\alpha|\alpha \le \lambda$) is a continuous direct system of
$R$-modules then for this to be a system of monomorphisms it
suffices that $X_\alpha \rightarrow X_{\alpha+1}$ be monomorphism
whenever $\alpha + 1 \le \lambda$. %(This follows from the AB5 axiom
%of a Grothendieck category.)

\begin{definition} Let $\mathcal{L}$ be a class of $R$-modules.
An $R$-module X of A is said to be a direct transfinite extension of
objects of $\mathcal{L}$ if $X = \underrightarrow{lim} X_\alpha$ for
a continuous direct system ($X_\alpha|\alpha \le \lambda$) of
monomorphisms such that $coker (X_\alpha \rightarrow X_{\alpha+1}) $
is in $\mathcal{L}$ whenever $\alpha + 1 \le \lambda$.% $\mathcal{L}$ is said to
%be closed under direct transfinite extensions if each direct
%transfinite extension of modules in $\mathcal{L}$ is also in
%$\mathcal{L}$.
\end{definition}

\begin{definition}
By a filtration of a module $M$ we mean that for some ordinal number $\lambda$ we have a continuous well-ordered chain $(M_{\alpha} | \alpha \le \lambda)$ of submodules of $M$ with $M_0 = 0$ and with $M_{\lambda} = M$. We say that $\lambda$ is the length of the filtration. If $\mathcal{C}$ is any class of modules, this filtration is said to be a $\mathcal{C}$-filtration if for every $\alpha + 1 \le \lambda$ we have that $M_{\alpha + 1}/M_{\alpha}$ is isomorphic to some $C \in \mathcal{C}$.\\
The class of all $\mathcal{C}$-filtered modules is denoted $Filt(\mathcal{C})$.
\end{definition}

Roughly speaking, $Filt(\mathcal{C})$ is the class of all transfinite extensions of modules in $\mathcal{C}$.

It is known (\cite{enochs:11:filt}, Theorem 5.5, and \cite{stovicek:11:deconstructibility}, Theorem in the Introduction) that if $\mathcal{C}$ is a \textbf{set} of modules, then $Filt(\mathcal{C})$ is precovering.

Our first result is a sufficient condition for the existence of Gorenstein injective precovers. %As already noted in the introduction, their existence requires $R$ be a left noetherian ring.
 It is known (\cite{enochs:02:kaplansky}) that when $R$ is right noetherian the class of Gorenstein injective left $R$-modules is a Kaplansky class.\\ Since we use this property in our proof, we recall the definition:\\

%We start by recalling the following definition:\\

\begin{definition}
Let $R$ be a ring, $\kappa$ an infinite cardinal, and $\mathcal{A}$
a class of $R$-modules. $\mathcal{A}$ is $\kappa$-Kaplansky if for
each $0 \neq A \in \mathcal{A}$ and each $\kappa$-generated
submodule $B \subseteq A$ there exists a $\kappa$-presentable
submodule $C \in \mathcal{A}$ such that $B \subseteq C \subseteq A$
and $A/C \in \mathcal{A}$.\\
%$\mathcal{A}$ is Kaplansky if it is $\kappa$-Kaplansky for some
%cardinal $\kappa$.
\end{definition}

%For the remaining of this section $R$ denotes a two-sided noetherian ring.

\begin{lemma}\label{GIKap}
Let $R$ be a right noetherian ring. There exists an infinite regular cardinal
$\kappa$ such that the class $\mathcal{GI}$ of Gorenstein injective left
$R$-modules is a $\kappa$--Kaplansky class.
\end{lemma}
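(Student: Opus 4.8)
The plan is to run the familiar ``closing off'' construction inside a complete injective resolution, harvesting from the right noetherian hypothesis the good behaviour of injective modules; the Kaplansky property itself is already recorded in \cite{enochs:02:kaplansky}, so the only genuinely new point is to pin down a \emph{regular} $\kappa$. I would obtain $\kappa$ as a successor cardinal, since every successor cardinal is regular: thus it suffices to produce a single cardinal $\mu$ bounding the construction and then set $\kappa=\mu^{+}$.

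To fix $\mu$, recall that over a right noetherian ring the injective modules occurring in a complete injective resolution decompose, up to boundedly many isomorphism types, into indecomposable injective summands (this is the structural input supplied by \cite{enochs:02:kaplansky}). Let $\mu$ be an infinite cardinal with $\mu\ge|R|$ and $\mu\ge|I|$ for every indecomposable injective left $R$-module $I$, and put $\kappa=\mu^{+}$. Then any direct sum of at most $\mu$ indecomposable injectives has cardinality $\le\mu<\kappa$, hence is $\kappa$-presentable, and a union of fewer than $\kappa$ such modules is again of cardinality $<\kappa$ — this is where regularity of $\kappa$ is used.

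Now let $0\neq A\in\mathcal{GI}$ be given, together with a complete injective resolution $\mathbf{E}=(E_n)$, exact and $\Hom(Inj,-)$-exact, with $A=\Ker(E_0\to E_{-1})$, and let $B\subseteq A$ be $\kappa$-generated. I would build an increasing union $\mathbf{F}=\bigcup_j\mathbf{F}^{(j)}$ of subcomplexes of $\mathbf{E}$ in which every $F_n$ is a direct sum of at most $\mu$ of the indecomposable summands of $E_n$; then $F_n$ is a direct summand of $E_n$, so both $F_n$ and $E_n/F_n$ are injective. Starting from a summand $F_0^{(0)}$ containing $B$, at each stage I adjoin at most $\mu$ further indecomposable summands in order to repair, one requirement at a time, the defects of: the differentials of $\mathbf{E}$ restricting to $\mathbf{F}$; the exactness of $\mathbf{F}$ and of $\mathbf{E}/\mathbf{F}$ (for each cycle one adjoins a chosen preimage); and the $\Hom(Inj,-)$-exactness of $\mathbf{F}$ and of $\mathbf{E}/\mathbf{F}$ (here one adjoins summands carrying the liftings of the maps out of each injective test module). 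Each repair costs at most $\mu$ summands and the process closes off after $\omega$ stages, so by the choice of $\kappa$ every $F_n$ stays of cardinality $\le\mu$.

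Setting $C=\Ker(F_0\to F_{-1})$, the complex $\mathbf{F}$ is a complete injective resolution, whence $C\in\mathcal{GI}$, and $C$ is $\kappa$-presentable since it is a submodule of the $\le\mu$-generated module $F_0$ over a ring of cardinality $\le\mu$. Because each $0\to F_n\to E_n\to E_n/F_n\to 0$ is split, $\mathbf{E}/\mathbf{F}$ is again a complex of injectives, exact and $\Hom(Inj,-)$-exact, and a short diagram chase identifies $\Ker(E_0/F_0\to E_{-1}/F_{-1})$ with $A/C$, so that $A/C\in\mathcal{GI}$; as $B\subseteq C\subseteq A$ this is exactly the $\kappa$-Kaplansky property. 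The step I expect to be the main obstacle is the simultaneous preservation of $\Hom(Inj,-)$-exactness for \emph{both} $\mathbf{F}$ and $\mathbf{E}/\mathbf{F}$ along the whole closing off while keeping each $F_n$ of cardinality $\le\mu$; it is precisely the right noetherian hypothesis, through the decomposition of injectives into boundedly small indecomposable types, that makes this bookkeeping go through.
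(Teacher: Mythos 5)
The paper itself offers no argument here: its ``proof'' is the single line ``This is \cite[Proposition 2.6]{enochs:02:kaplansky}'', so your attempt has to be judged as a self-contained reconstruction, and it has two genuine gaps. The first is that your structural input is on the wrong side of the ring. The decomposition of every injective module into a direct sum of indecomposable injectives, the existence of a cardinal $\mu$ bounding the (set of) indecomposable injectives, and the reduction of the $\Hom(Inj,-)$-exactness test to indecomposables (via $\Hom(\oplus_j I_j,-)=\prod_j\Hom(I_j,-)$) are all Matlis theory, which for \emph{left} modules is equivalent to $R$ being \emph{left} noetherian. The lemma assumes only that $R$ is \emph{right} noetherian while $\mathcal{GI}$ consists of left modules, so this decomposition is simply not available, and it is not ``the structural input supplied by \cite{enochs:02:kaplansky}''. (Note that the present paper invokes decompositions into indecomposable injectives only under a two-sided noetherian hypothesis, in Proposition \ref{closureTE}.) Without it you do not even have a \emph{set} of test modules to enumerate in the closing-off.

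The second gap sits exactly at the step you flag as the main obstacle, and it is a counting error, not just a missing detail. A test module $I$ is an infinitely generated injective of cardinality up to $\mu$, so the collection of cycles $I\to F_n$ that must be made into boundaries has cardinality bounded only by $|F_n|^{|I|}\le 2^{\mu}$, not by $\mu$; hence one ``repair'' round can force the adjunction of up to $2^{\mu}$ new summands, destroying the bound $|F_n|\le\mu$ and with it the $\kappa$-presentability of $C$ for $\kappa=\mu^{+}$. Moreover, $\omega$ stages cannot close the process off: a map from the infinitely generated module $I$ into $\bigcup_{j<\omega}F^{(j)}_n$ need not factor through any single stage (its image can be cofinal in the countable chain), so new unrepaired cycles appear in the limit --- unlike the element-wise exactness repairs, which do stabilize after $\omega$ steps. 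Both defects are reparable: work with a bound $\nu$ satisfying $\nu^{\mu}=\nu$ (e.g.\ $\nu=2^{\mu}$), take $\kappa=\nu^{+}$, and run the closing-off along a chain whose length has cofinality greater than $\mu$. Your successor-cardinal device for securing regularity of $\kappa$ is correct, but it is the cheap part; as written, the argument fails precisely where its real content lies.
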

\begin{proof}
This is \cite[Proposition 2.6]{enochs:02:kaplansky}.
\end{proof}

\begin{proposition}\label{ppGI}
Let $R$ be a right noetherian ring, $\kappa$ an infinite regular cardinal as
in Lemma \ref{GIKap}, and let $\mathcal{X}$ denote a set of
representatives of isomorphism classes of Gorenstein injective
modules $M$ such that $|M|\leq \kappa$. The following assertions are equivalent:
\begin{enumerate}
 \item $\mathcal{GI}$ is closed under $\mathcal{X}$--filtrations.
\item $\mathcal{GI}=Filt(\mathcal{X})$.
\end{enumerate}

\end{proposition}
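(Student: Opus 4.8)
My plan is to treat the two implications quite asymmetrically, since $(2)\Rightarrow(1)$ is formal while $(1)\Rightarrow(2)$ carries all the content. For $(2)\Rightarrow(1)$, I would simply observe that ``$\mathcal{GI}$ is closed under $\mathcal{X}$-filtrations'' means precisely $Filt(\mathcal{X})\subseteq\mathcal{GI}$, which is contained in the equality $\mathcal{GI}=Filt(\mathcal{X})$. For $(1)\Rightarrow(2)$, hypothesis (1) already gives the inclusion $Filt(\mathcal{X})\subseteq\mathcal{GI}$, and since every member of $\mathcal{X}$ is Gorenstein injective this is the easy half; so the whole task reduces to proving the reverse inclusion $\mathcal{GI}\subseteq Filt(\mathcal{X})$, i.e.\ that every Gorenstein injective module admits an $\mathcal{X}$-filtration.

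To build such a filtration of a given $0\neq M\in\mathcal{GI}$, I would run a transfinite recursion driven by the $\kappa$-Kaplansky property of Lemma~\ref{GIKap}, carrying throughout the invariant that the quotient $M/M_\alpha$ is Gorenstein injective. Starting from $M_0=0$ (so $M/M_0=M\in\mathcal{GI}$), at a successor stage with $M_\alpha\subsetneq M$ and $M/M_\alpha\in\mathcal{GI}$ I would pick a nonzero element of $M/M_\alpha$, let $B$ be the cyclic (hence $\kappa$-generated) submodule it generates, and apply the Kaplansky property to $A=M/M_\alpha$ and $B$ to obtain a $\kappa$-presentable $C\in\mathcal{GI}$ with $B\subseteq C\subseteq M/M_\alpha$ and $(M/M_\alpha)/C\in\mathcal{GI}$. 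Taking $M_{\alpha+1}$ to be the preimage of $C$ in $M$ yields $M_{\alpha+1}/M_\alpha\cong C$ and $M/M_{\alpha+1}\cong(M/M_\alpha)/C\in\mathcal{GI}$, preserving the invariant; and since $\kappa$ is infinite regular a $\kappa$-presentable module has cardinality at most $\kappa$, so $C$ is isomorphic to a member of $\mathcal{X}$. At a limit stage $\beta$ I would set $M_\beta=\bigcup_{\alpha<\beta}M_\alpha$ for continuity. As the chain strictly increases while $M_\alpha\neq M$, the process terminates with $M_\lambda=M$, and the successive quotients in $\mathcal{X}$ then witness $M\in Filt(\mathcal{X})$.

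The delicate point, and the one place where hypothesis (1) is genuinely used, is reestablishing the invariant at limit stages: before continuing I must know $M/M_\beta\in\mathcal{GI}$, and closure under direct limits is deliberately unavailable here (that is the stronger assumption behind the covering theorems). I would bridge this in two steps. First, the restricted chain $(M_\alpha\,|\,\alpha\le\beta)$ is a continuous filtration with successive quotients in $\mathcal{X}$, so $M_\beta\in Filt(\mathcal{X})$, whence $M_\beta\in\mathcal{GI}$ by hypothesis (1); this is exactly what the filtration hypothesis supplies. Second, from $0\to M_\beta\to M\to M/M_\beta\to 0$ with both $M_\beta$ and $M$ Gorenstein injective, I would conclude $M/M_\beta\in\mathcal{GI}$ using the known fact (Holm) that the class of Gorenstein injective modules is closed under cokernels of monomorphisms. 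I expect this to be the main obstacle: without the coresolving property one cannot restart the Kaplansky recursion past a limit ordinal, and it is precisely the interaction of ``closed under $\mathcal{X}$-filtrations'' (to obtain $M_\beta\in\mathcal{GI}$) with injective coresolution (to pass to $M/M_\beta\in\mathcal{GI}$) that drives the construction through every limit and completes the proof.
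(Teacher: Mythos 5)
Your proof is correct and follows essentially the same route as the paper's: a transfinite recursion applying the $\kappa$--Kaplansky property to the quotient $M/M_\alpha$ at successor stages, with hypothesis (1) supplying $M_\beta\in\mathcal{GI}$ at limit stages and closure of $\mathcal{GI}$ under cokernels of monomorphisms (coresolving) then giving $M/M_\beta\in\mathcal{GI}$ so the recursion can continue. The only cosmetic differences are that the paper drives the recursion along a fixed generating set $\{g_\alpha\}$ (and also tracks $G_\alpha\in\mathcal{GI}$ via closure under extensions), whereas you pick arbitrary nonzero elements and argue termination by strict growth of the chain.
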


\begin{proof}

$(2)\Rightarrow (1)$ Clear.\medskip \par\noindent

$(1)\Rightarrow (2)$ By $(1)$ it is clear that $Filt(\mathcal{X})\subseteq
\mathcal{GI}$. Conversely, let $G \neq 0$ be a Gorenstein injective
module and let $\{g_\alpha, \alpha < \lambda \}$ be a generating set
of $G$. Let $G_0 =0$; if $G_\alpha$ is defined such that $G_\alpha$
is Gorenstein injective then let $A=G/G_\alpha$ and $B=(g_\alpha +
G_\alpha)R$. Since $\mathcal{GI}$ is Kaplansky there exists
$G_{\alpha +1} \subseteq G$ such that $G_\alpha \bigcup \{g_\alpha
\} \subseteq G_{\alpha +1}$ and $G_{\alpha +1} / G_\alpha$ is
Gorenstein injective. Then $G/G_{\alpha+1} \simeq
\frac{G/G_\alpha}{G_{\alpha+1}/G_\alpha}$ is Gorenstein injective
and $G_{\alpha+1}$ is Gorenstein injective because $\mathcal{GI}$ is
closed under extensions. If $\beta \le \lambda$ is a limit ordinal,
then we set $G_\beta = \bigcup_{\alpha < \beta} G_\alpha$. From $(1)$ we infer
that $G_{\beta}\in \mathcal{GI}$. Now,
since the sequence $0 \rightarrow G_\beta \rightarrow G \rightarrow
G/G_\beta \rightarrow 0$ is exact with $G_\beta$ and $G$ Gorenstein
injectives, and $\mathcal{GI}$ is closed under cokernels of
monomorphisms, it follows that $G/G_\beta$ is Gorenstein injective,
so we can continue the induction. The process clearly terminates.
%since $\textrm{Mod-}R$ is locally small.
 Thus $G\in Filt(\mathcal{X})$.

\end{proof}

\begin{theorem}
Under the assumptions of Proposition \ref{ppGI} the class
of Gorenstein injective modules is
precovering.
\end{theorem}

\begin{proof}
By
\cite{enochs:11:filt}, Theorem 5.5, or by \cite{stovicek:11:deconstructibility}, Theorem in the Introduction, $Filt(\mathcal{\mathcal{X}})$ is
precovering.
\end{proof}

If moreover every $R$-module has a special $\mathcal{GI}$-precover then the converse is also true. As noted in the introduction, the class $\mathcal{GI}$ being precovering requires $R$ to be left noetherian.\\

We show that in this case the class $\mathcal{GI}$ is closed under transfinite extensions.

%We recall the following\\

%\begin{definition}  A direct system of modules ($X_\alpha|\alpha \le \lambda$) is said to be
%continuous if $X_0 = 0$ and if for each limit ordinal $\beta \le
%\lambda$ we have $X_\beta = \underrightarrow{lim} X_\alpha$ with the
%limit over the $\alpha < \beta$. The direct system $(X_\alpha|
%\alpha \le \lambda$) is said to be a system of monomorphisms if all
%the morphisms in the system are monomorphisms.
%\end{definition}

%If ($X_\alpha|\alpha \le \lambda$) is a continuous direct system of
%$R$-modules then for this to be a system of monomorphisms it
%suffices that $X_\alpha \rightarrow X_{\alpha+1}$ be monomorphism
%whenever $\alpha + 1 \le \lambda$. %(This follows from the AB5 axiom
%of a Grothendieck category.)

%\begin{definition} Let $\mathcal{L}$ be a class of $R$-modules.
%An $R$-module X of A is said to be a direct transfinite extension of
%objects of $\mathcal{L}$ if $X = \underrightarrow{lim} X_\alpha$ for
%a continuous direct system ($X_\alpha|\alpha \le \lambda$) of
%monomorphisms such that $coker (X_\alpha \rightarrow X_{\alpha+1}) $
%is in L whenever $\alpha + 1 \le \lambda$. $\mathcal{L}$ is said to
%be closed under direct transfinite extensions if each direct
%transfinite extension of modules in $\mathcal{L}$ is also in
%$\mathcal{L}$.
%\end{definition}

\begin{proposition}\label{closureTE}
Let $R$ be a two sided noetherian ring. If every $R$-module has a special Gorenstein injective precover then
the class $\mathcal{GI}$ of
Gorenstein injective modules is closed under
direct transfinite extensions.
\end{proposition}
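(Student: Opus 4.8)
The plan is to combine the hypothesis with the Eklof Lemma, reducing the whole statement to a single $\ext{R}{1}$-vanishing computation. Write $\mathcal{GI}^{\perp}=\{Y\ :\ \ext{R}{1}(G,Y)=0\ \text{for all}\ G\in\mathcal{GI}\}$. Given a direct transfinite extension $X=\rlim X_{\alpha}$ of Gorenstein injectives, arising from a continuous system of monomorphisms $(X_{\alpha}\mid\alpha\le\lambda)$ with $coker(X_{\alpha}\to X_{\alpha+1})\in\mathcal{GI}$ for all $\alpha+1\le\lambda$, I would first record that $X$ is a $\mathcal{GI}$-filtered module in the sense of the filtration definition above: identifying each $X_{\alpha}$ with its image in $X$ (legitimate because the transition maps are monic), the chain $(X_{\alpha})$ becomes a continuous well-ordered filtration of $X$ whose successive quotients $X_{\alpha+1}/X_{\alpha}\cong coker(X_{\alpha}\to X_{\alpha+1})$ all lie in $\mathcal{GI}$. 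Thus $X\in Filt(\mathcal{GI})$, and it is enough to prove $Filt(\mathcal{GI})\subseteq\mathcal{GI}$.

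Next I would invoke the hypothesis: $X$ admits a special Gorenstein injective precover, that is, a short exact sequence
\[
0\longrightarrow K\longrightarrow G\longrightarrow X\longrightarrow 0
\]
with $G\in\mathcal{GI}$ and $K\in\mathcal{GI}^{\perp}$. If this sequence splits then $X$ is a direct summand of the Gorenstein injective module $G$, and since $\mathcal{GI}$ is closed under direct summands we conclude $X\in\mathcal{GI}$. So the task is reduced to showing that the sequence splits, equivalently that $\ext{R}{1}(X,K)=0$.

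This last point is exactly what the Eklof Lemma delivers. Indeed $X$ is filtered by modules lying in $\mathcal{GI}$, and the membership $K\in\mathcal{GI}^{\perp}$ says precisely that $\ext{R}{1}(C,K)=0$ for every $C\in\mathcal{GI}$, in particular for every successive quotient of the filtration of $X$. By the Eklof Lemma the class ${}^{\perp}\{K\}=\{M\ :\ \ext{R}{1}(M,K)=0\}$ is closed under filtrations, whence $\ext{R}{1}(X,K)=0$, the sequence splits, and $X\in\mathcal{GI}$ as required.

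I expect the only delicate point to be the routine but necessary bookkeeping that presents a direct transfinite extension, defined here via a direct limit, as a genuine filtration to which the Eklof Lemma applies; the correct behaviour at limit ordinals is exactly what continuity of the system guarantees. Once this identification is in place the argument is formal. It is worth noting that the same reasoning shows $\mathcal{GI}={}^{\perp}(\mathcal{GI}^{\perp})$, so that $(\mathcal{GI},\mathcal{GI}^{\perp})$ is a cotorsion pair; from this viewpoint the asserted closure is the familiar principle that the left-hand class of a cotorsion pair is closed under filtrations.
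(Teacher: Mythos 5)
Your skeleton (Eklof's lemma to get $Ext^1(X,K)=0$, then split a special precover) is the same as the paper's, but there is a genuine gap at the point where you present the special Gorenstein injective precover of $X$ as a short exact sequence $0 \to K \to G \to X \to 0$: you are assuming the precover is \emph{surjective}, and nothing in the hypotheses gives you that. Gorenstein injective precovers need not be surjective, since $\mathcal{GI}$ contains no generating class; and in this paper ``special'' cannot be read as including surjectivity, because the Corollary immediately following the Proposition deduces closure under transfinite extensions from $\mathcal{GI}$ being \emph{covering}, and a cover is a special precover only via Wakamatsu's lemma, which yields $\Ker\phi \in \mathcal{GI}^{\perp}$ but not surjectivity. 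A concrete illustration that your key step fails without surjectivity: over $R=\mathbb{Z}$ the Gorenstein injective modules are exactly the divisible groups, so the only morphism from a Gorenstein injective module to $\mathbb{Z}$ is zero; hence $0 \to \mathbb{Z}$ is a Gorenstein injective cover, special in the paper's sense with kernel $0$, and of course $Ext^1(\mathbb{Z},0)=0$, yet $\mathbb{Z}$ is not Gorenstein injective. The same example refutes your closing remark: over $\mathbb{Z}$ every projective module lies in ${}^{\perp}(\mathcal{GI}^{\perp})$, so ${}^{\perp}(\mathcal{GI}^{\perp}) \neq \mathcal{GI}$ even though every module there has a special Gorenstein injective precover.

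Filling this gap is exactly the heart of the paper's proof, and it is where the two-sided noetherian hypothesis --- which your argument never uses, a warning sign --- enters. The paper shows that the particular module $X=\rlim X_\alpha$ receives a surjection from an injective module: each $X_\alpha$, being Gorenstein injective, is a quotient of an injective module, so the evaluation map from a suitable direct sum of indecomposable injectives gives a surjective injective precover $E_\alpha \to X_\alpha$, constructed functorially; functoriality makes the $E_\alpha$ into a direct system compatible with the $X_\alpha$, the induced map $E=\rlim E_\alpha \to X$ is surjective, and $E$ is injective because $R$ is noetherian. Since $E \in \mathcal{GI}$ surjects onto $X$ and this surjection factors through any $\mathcal{GI}$-precover of $X$, every Gorenstein injective precover of $X$ --- in particular the special one supplied by the hypothesis --- is forced to be surjective. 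Only at that point does one obtain your short exact sequence $0 \to K \to G \to X \to 0$ with $G \in \mathcal{GI}$ and $K \in \mathcal{GI}^{\perp}$, after which your Eklof-plus-splitting argument (which is correct, and identical to the paper's) concludes.
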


\begin{proof}
Let ($G_\alpha|\alpha \le \lambda$) be a direct system of
monomorphisms, with each $G_\alpha \in \mathcal{GI}$, and let $G=
\underrightarrow{lim} G_\alpha$.  Since for each $\alpha$ we have\\
$G_\alpha \in ^\bot(\mathcal{GI}^\bot)$ it follows that $G=
\underrightarrow{lim} G_\alpha \in ^\bot(\mathcal{GI}^\bot)$ (by \cite{eklof:97:homological}, Theorem 1.2).\\
 For
each $\alpha$ consider $\oplus_{E \in X} E
^{(Hom(E,G_\alpha))} \rightarrow G_\alpha$ where the map is the
evaluation map, and $X$ is a representative set of indecomposable
injective modules $E$. This is an injective precover of $G_\alpha$,
and since $G_\alpha$ is Gorenstein injective, $\oplus_{E \in X}
E ^{(Hom(E,G_\alpha))} \rightarrow G_\alpha$ is surjective.
Also this way of constructing a precover is functorial. The map
$G_\alpha \rightarrow G_\beta$ gives rise to a a map $E_\alpha
\rightarrow E_\beta$. Since $E_\alpha \rightarrow G_\alpha$ was
constructed in a functorial manner, we have that when $\alpha \le
\beta \le \gamma$ the map $E_\alpha \rightarrow E_\gamma$ is the
composition of the two maps $E_\alpha \rightarrow E_\beta$ and
$E_\beta \rightarrow E_\gamma$.

 Then we have an exact sequence
$E \rightarrow G \rightarrow 0$ with $E = \underrightarrow{lim}
E_\alpha$ an injective module. It follows that $G$ has a surjective
injective cover and therefore a surjective special Gorenstein
injective precover. So there is an exact sequence
$$0 \rightarrow A \rightarrow \overline{G} \rightarrow G \rightarrow 0$$ with
$A \in \mathcal{GI}^\bot$ and $\overline{G}$ Gorenstein injective. But by the
above we have that $Ext^1 (G,A)=0$. So $G$ is a direct summand of $\overline{G}
\in \mathcal{GI}$.
\end{proof}

\begin{corollary}
Let $R$ be a two sided noetherian. If the class of Gorenstein injective modules, $\mathcal{GI}$, is covering then $\mathcal{GI}$ is closed under transfinite extensions.
\end{corollary}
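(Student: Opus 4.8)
The plan is to reuse the construction from the proof of Proposition \ref{closureTE}, replacing its single appeal to the standing hypothesis --- the existence of a special Gorenstein injective precover of $G$ --- by an argument that invokes only the covering property. So I would start with a continuous system of monomorphisms $(G_\alpha \mid \alpha \le \lambda)$ with each $G_\alpha \in \mathcal{GI}$, set $G = \rlim G_\alpha$, and aim to show $G \in \mathcal{GI}$. Exactly as in Proposition \ref{closureTE}, I would form the functorial injective precovers $E_\alpha \to G_\alpha$ (epimorphisms, since each $G_\alpha$ is Gorenstein injective) and assemble them into an epimorphism $E = \rlim E_\alpha \to G$ whose source $E$ is injective (direct limits of injectives are injective over the left noetherian ring $R$). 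I would also record, via Eklof's theorem \cite{eklof:97:homological}, that $G \in {}^\bot(\mathcal{GI}^\bot)$.

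The crux is to manufacture a surjective special Gorenstein injective precover of $G$ directly from the covering hypothesis. Since $\mathcal{GI}$ is covering, $G$ has a Gorenstein injective cover $\phi \colon \overline{G} \to G$. Because $E$ is injective, hence Gorenstein injective, the precover property forces the epimorphism $E \to G$ to factor through $\phi$; therefore $\phi$ is surjective. As $\mathcal{GI}$ is closed under extensions (here I use that $R$ is two sided noetherian), Wakamatsu's Lemma then applies to the surjective cover $\phi$ and yields $A := \Ker \phi \in \mathcal{GI}^\bot$. Thus $0 \to A \to \overline{G} \to G \to 0$ is the desired surjective special Gorenstein injective precover.

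To finish, I would argue as in Proposition \ref{closureTE}: from $G \in {}^\bot(\mathcal{GI}^\bot)$ and $A \in \mathcal{GI}^\bot$ we obtain $Ext^1(G,A)=0$, so the sequence splits and $G$ is a direct summand of the Gorenstein injective module $\overline{G}$; closure of $\mathcal{GI}$ under direct summands then gives $G \in \mathcal{GI}$. I expect the main obstacle to be the surjectivity of $\phi$. A Gorenstein injective cover need not be an epimorphism in general --- over $\mathbb{Z}$, for instance, where $\mathcal{GI}$ is just the class of injectives, the cover of $\mathbb{Z}$ itself is the zero map --- so the covering property alone does \emph{not} give every module a special Gorenstein injective precover, and the corollary is therefore not a purely formal consequence of Proposition \ref{closureTE}. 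What rescues the argument is the special feature of the module at hand: a transfinite extension of Gorenstein injectives is a quotient of an injective module, and it is precisely this that makes its cover surjective and Wakamatsu's Lemma available.
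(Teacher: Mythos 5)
Your proof is correct, and in substance it is the argument the paper intends but never writes down: the corollary is stated with no proof, as an immediate consequence of Proposition \ref{closureTE}, on the strength of the introduction's assertion that when $\mathcal{GI}$ is covering every module has a special Gorenstein injective precover. Your proposal rightly refuses to take that assertion at face value, and your counterexample is apt: over $\mathbb{Z}$ the Gorenstein injective modules are exactly the divisible groups, the Gorenstein injective cover of $\mathbb{Z}$ is the zero map, and $\mathbb{Z}$ admits no special Gorenstein injective precover whatsoever, so the corollary is \emph{not} a purely formal consequence of the proposition. Your repair is exactly the step that makes the paper's proof of Proposition \ref{closureTE} run under the covering hypothesis: the module $G$ at hand is a transfinite extension of Gorenstein injectives, hence (via the functorial injective precovers and the fact that direct limits of injectives are injective over a left noetherian ring) a quotient of an injective module $E$; since $E$ is Gorenstein injective, the cover $\phi\colon \overline{G}\to G$ must be surjective, and Wakamatsu's Lemma, applicable because $\mathcal{GI}$ is closed under extensions, gives $\Ker\phi\in\mathcal{GI}^{\perp}$ --- that is, a surjective special precover of this particular $G$. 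From there your conclusion ($G\in{}^{\perp}(\mathcal{GI}^{\perp})$ by Eklof's theorem, so $Ext^{1}(G,\Ker\phi)=0$, the sequence splits, and $G$ is a direct summand of $\overline{G}$) coincides with the paper's. One minor remark: closure of $\mathcal{GI}$ under extensions and direct summands holds over any ring by Holm's results, so the two-sided noetherian hypothesis is not what licenses the Wakamatsu step; it is needed, as you use it elsewhere, for the Kaplansky/functorial precover construction and for $\rlim E_{\alpha}$ to be injective.
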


\begin{proposition}
Let $R$ be two sided noetherian. If every $R$-module has a special Gorenstein injective precover then $\mathcal{GI}$ is closed under $\mathcal{X}$-filtrations.
\end{proposition}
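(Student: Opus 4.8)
The plan is to observe that an $\mathcal{X}$-filtration is precisely a special case of a direct transfinite extension of Gorenstein injective modules, and then to quote Proposition \ref{closureTE}. Since $\mathcal{X}$ was defined as a set of representatives of isomorphism classes of Gorenstein injective modules $M$ with $|M| \le \kappa$, we have $\mathcal{X} \subseteq \mathcal{GI}$, and this containment is what lets the reduction go through.

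Concretely, I would start from $M \in Filt(\mathcal{X})$ and fix an $\mathcal{X}$-filtration $(M_\alpha \mid \alpha \le \lambda)$ of $M$, so that $M_0 = 0$, $M_\lambda = M$, the chain is continuous, and $M_{\alpha+1}/M_\alpha$ is isomorphic to some member of $\mathcal{X}$ whenever $\alpha + 1 \le \lambda$. The inclusions $M_\alpha \hookrightarrow M_{\alpha+1}$ are monomorphisms, and continuity gives $M_\beta = \bigcup_{\alpha < \beta} M_\alpha$ at each limit ordinal $\beta \le \lambda$; hence $(M_\alpha \mid \alpha \le \lambda)$ is a continuous direct system of monomorphisms with $M = \rlim M_\alpha$. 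Its successive cokernels are $\mathrm{coker}(M_\alpha \hookrightarrow M_{\alpha+1}) = M_{\alpha+1}/M_\alpha$, each isomorphic to an element of $\mathcal{X}$.

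Since $\mathcal{X} \subseteq \mathcal{GI}$, every such cokernel is Gorenstein injective, so $M$ is a direct transfinite extension of objects of $\mathcal{GI}$ in the sense defined above. The hypotheses of Proposition \ref{closureTE} --- that $R$ is two sided noetherian and that every $R$-module has a special Gorenstein injective precover --- are exactly those in force here, and that proposition tells us $\mathcal{GI}$ is closed under direct transfinite extensions. Therefore $M \in \mathcal{GI}$, which proves $Filt(\mathcal{X}) \subseteq \mathcal{GI}$, i.e. $\mathcal{GI}$ is closed under $\mathcal{X}$-filtrations.

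I do not anticipate a genuine obstacle: all the substantive content --- the closure of ${}^{\bot}(\mathcal{GI}^{\bot})$ under direct limits together with the splitting coming from special precovers --- is already packaged in Proposition \ref{closureTE}. The only care needed is the bookkeeping that matches the ``filtration by submodules'' description to the ``continuous direct system of monomorphisms'' description, and that uses $\mathcal{X} \subseteq \mathcal{GI}$ to pass from $\mathcal{X}$-quotients to Gorenstein injective cokernels.
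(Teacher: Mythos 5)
Your proof is correct and is exactly the paper's argument: the paper's own proof is the one-line observation that Proposition \ref{closureTE} gives $Filt(\mathcal{X})\subseteq \mathcal{GI}$, which is precisely your reduction of an $\mathcal{X}$-filtration to a direct transfinite extension of objects of $\mathcal{X}\subseteq\mathcal{GI}$. You have simply made explicit the bookkeeping (filtration $=$ continuous direct system of monomorphisms with cokernels in $\mathcal{X}$) that the paper leaves implicit.
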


\begin{proof}
By Proposition 2 above $Filt (\mathcal{X}) \subseteq \mathcal{GI}$.% The argument in the proof of Proposition 1 gives
\end{proof}

\begin{proposition}
When the ring $R$ is two sided noetherian and the class of Gorenstein injective modules is closed under direct limits, the class $\mathcal{GI}$ is covering.
\end{proposition}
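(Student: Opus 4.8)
The plan is to reduce to the precovering theorem already obtained and then to invoke the classical principle that a precovering class closed under direct limits is automatically covering. The first step is to verify that closure under direct limits forces $\mathcal{GI}$ to be closed under $\mathcal{X}$-filtrations, so that Proposition \ref{ppGI} and the Theorem following it apply. Let $(M_\alpha \mid \alpha \le \lambda)$ be a $\mathcal{GI}$-filtration of a module $M$. I would show by transfinite induction that $M_\alpha \in \mathcal{GI}$ for every $\alpha$. The base case $M_0 = 0$ is trivial. At a successor stage there is a short exact sequence $0 \to M_\alpha \to M_{\alpha+1} \to M_{\alpha+1}/M_\alpha \to 0$ in which $M_\alpha \in \mathcal{GI}$ by induction and $M_{\alpha+1}/M_\alpha \in \mathcal{GI}$ by the definition of a $\mathcal{GI}$-filtration; since $\mathcal{GI}$ is closed under extensions (the same fact used in the proof of Proposition \ref{ppGI}), we obtain $M_{\alpha+1} \in \mathcal{GI}$. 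At a limit ordinal $\beta$ we have $M_\beta = \rlim M_\alpha$ with the limit over $\alpha < \beta$, and each such $M_\alpha \in \mathcal{GI}$, so $M_\beta \in \mathcal{GI}$ because $\mathcal{GI}$ is closed under direct limits. Taking $\alpha = \lambda$ yields $M = M_\lambda \in \mathcal{GI}$; in particular $\mathcal{GI}$ is closed under $\mathcal{X}$-filtrations.

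Since $R$ is two sided noetherian it is in particular right noetherian, so by the Theorem stated under the assumptions of Proposition \ref{ppGI} the class $\mathcal{GI}$ is precovering in $R-Mod$. It then remains to invoke the classical theorem that a precovering class of modules which is closed under direct limits is covering (see \cite{enochs:00:relative}): one builds the cover as a direct limit of an appropriate direct system extracted from a precover, using direct-limit closure to keep the limit inside the class and a minimality argument to obtain the defining automorphism property of a cover. As we have shown $\mathcal{GI}$ to be precovering and it is closed under direct limits by hypothesis, this gives that $\mathcal{GI}$ is covering, as required.

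The main obstacle here is modest, since almost all of the weight rests on the two cited results and the only genuinely new verification is the passage from direct-limit closure to filtration closure in the first step. There the delicate point is the successor stage of the induction, which requires that $\mathcal{GI}$ be closed under extensions over the noetherian ring $R$; this closure property is standard for Gorenstein injective modules and is already used freely in the proof of Proposition \ref{ppGI}, so no additional hypothesis is needed.
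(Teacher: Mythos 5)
Your proof is correct and takes essentially the same route as the paper: both deduce that $\mathcal{GI}$ is precovering via the Theorem following Proposition \ref{ppGI}, and then conclude by the classical fact (\cite{enochs:00:relative}, Corollary 5.2.7) that a precovering class closed under direct limits is covering. The only difference is that you spell out, by transfinite induction using extension closure at successor stages and direct-limit closure at limit stages, the verification that closure under direct limits implies closure under $\mathcal{X}$-filtrations --- a step the paper's proof leaves implicit when it invokes Theorem 1.
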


\begin{proof}
Since $R$ is right noetherian the class of Gorenstein injective modules is
Kaplansky. Since $\mathcal{GI}$ is also closed under direct limits,
it is precovering (by Theorem 1).
%a Kaplansky class (\cite{enochs:02:kaplansky}). Since it is also
%closed under direct limits, it is a deconstructible class
%(\cite{stovicek:11:deconstructibility}, Corollary 2.7) Then the
%class of Gorenstein injectives is precovering
%(\cite{stovicek:11:deconstructibility}, the first Theorem on page 2).
A precovering class that is also closed under direct limits is
covering (\cite{enochs:00:relative}, Corollary, 5.2.7)
\end{proof}

\begin{corollary}(\cite{enochs:00:relative}, Theorem 11.1.3)
Over a Gorenstein ring the class of Gorenstein injective modules is
covering.
\end{corollary}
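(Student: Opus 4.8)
My plan is to obtain this classical statement as a direct consequence of the preceding Proposition, which guarantees that $\mathcal{GI}$ is covering over any two sided noetherian ring for which $\mathcal{GI}$ is closed under direct limits. It therefore suffices to verify these two hypotheses for a Gorenstein ring $R$. The first is immediate: by definition an (Iwanaga--)Gorenstein ring is two sided noetherian with finite self-injective dimension on both sides, say $\mathrm{id}({}_RR)=\mathrm{id}(R_R)=n<\infty$.

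The work lies in showing that $\mathcal{GI}$ is closed under direct limits over such a ring. Here I would appeal to the theory of Gorenstein injective dimension over $n$-Gorenstein rings: every $R$-module has Gorenstein injective dimension at most $n$, and $G$ is Gorenstein injective precisely when $Ext^{i}(L,G)=0$ for all $i\ge 1$ and all modules $L$ of finite injective dimension. Over a Gorenstein ring the modules of finite injective dimension are exactly those of finite projective dimension, and the decisive reduction is that the vanishing need only be tested against the \emph{finitely generated} such $L$. Since $R$ is noetherian, any such $L$ admits a resolution by finitely generated projective modules, hence is of type $FP_\infty$, so $Ext^{i}(L,-)$ commutes with direct limits. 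Thus for a direct system $(G_j)$ of Gorenstein injectives with limit $G=\rlim G_j$ we obtain $Ext^{i}(L,G)=\rlim Ext^{i}(L,G_j)=0$ for all such $L$ and all $i\ge 1$, so $G\in\mathcal{GI}$.

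With both hypotheses in hand the preceding Proposition applies and $\mathcal{GI}$ is covering. The step I expect to be most delicate is the reduction of the Ext-test for Gorenstein injectivity to finitely generated modules of finite projective dimension, since this is exactly what makes Gorenstein injectivity stable under direct limits; it is here that the Gorenstein hypothesis is essential, both to bound the Gorenstein injective dimension and to identify finite injective with finite projective dimension. Alternatively, one may bypass this reduction and simply invoke the direct-limit closure of $\mathcal{GI}$ over Gorenstein rings established in \cite{enochs:00:relative}.
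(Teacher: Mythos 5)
Your derivation is exactly the paper's intended one: the corollary is stated there without proof, immediately after the Proposition asserting that over a two-sided noetherian ring with $\mathcal{GI}$ closed under direct limits the class $\mathcal{GI}$ is covering, so (just as in the paper's explicit proof of the parallel dualizing-complex corollary) the only inputs needed are that Gorenstein rings are two-sided noetherian and that $\mathcal{GI}$ is closed under direct limits over them, the latter being part of the cited machinery in \cite{enochs:00:relative}. Your sketch of that closure via finitely generated test modules of finite projective dimension is the standard mechanism behind the cited fact (and you rightly flag it as the delicate point), while your fallback of simply invoking \cite{enochs:00:relative} for the direct-limit closure is precisely the level of detail the paper itself supplies.
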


%\begin{lemma}
%Let $R$ be noetherian. If the class of Gorenstein injective modules
%is closed under direct limits then it is covering.
%\end{lemma}

%\begin{proof}
%Since $R$ is noetherian the class of Gorenstein injective modules is
%Kaplansky. Since $\mathcal{GI}$ is also closed under direct limits,
%it is precovering.
%a Kaplansky class (\cite{enochs:02:kaplansky}). Since it is also
%closed under direct limits, it is a deconstructible class
%(\cite{stovicek:11:deconstructibility}, Corollary 2.7) Then the
%class of Gorenstein injectives is precovering
%(\cite{stovicek:11:deconstructibility}, the first Theorem on page 2).
%A precovering class that is also closed under direct limits is
%covering (\cite{enochs:00:relative}, Corollary, 5.2.7)
%\end{proof}

\begin{corollary} (\cite{holm:09:cotorsion.pais}, Theorem 3.3)
If $R$ is commutative noetherian with a dualizing complex then the class of Gorenstein injective modules is covering.
\end{corollary}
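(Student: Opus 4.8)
The plan is to derive this from the Proposition asserting that over a two sided noetherian ring for which $\mathcal{GI}$ is closed under direct limits, the class $\mathcal{GI}$ is covering (Proposition 4). Since a commutative noetherian ring is in particular two sided noetherian, the entire problem collapses to a single point: to show that over a commutative noetherian ring $R$ with a dualizing complex the class $\mathcal{GI}$ of Gorenstein injective modules is closed under direct limits. Granting this, an appeal to Proposition 4 finishes the argument, so all the real work is in the closure statement.

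To establish closure under direct limits I would use the dualizing complex $D$ together with the associated Foxby equivalence between the Auslander class $\mathcal{A}(R)$ and the Bass class $\mathcal{B}(R)$, given by the adjoint functors $D\otimes^{\mathbf L}_R-$ and $\mathrm{RHom}_R(D,-)$. The relevant input is that over such a ring the character module $M^+$ of a Gorenstein injective module $M$ is Gorenstein flat (this is the property highlighted in the abstract, and it is precisely where the dualizing complex enters), while, dually, the class $\mathcal{GF}$ of Gorenstein flat modules is closed under direct limits because $R$, being noetherian, is coherent. The key structural feature is that both functors of the equivalence are compatible with direct limits: $D\otimes^{\mathbf L}_R-$ always is, and $\mathrm{RHom}_R(D,-)$ is as well, since over the noetherian ring $R$ the complex $D$ may be represented by a bounded complex of finitely generated, hence finitely presented, modules. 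One then transports the known closure of $\mathcal{GF}$ under direct limits across this direct-limit preserving equivalence to obtain the desired closure for $\mathcal{GI}$.

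The main obstacle is exactly this closure under direct limits, and the delicate point is that a naive character-module argument cannot work. Indeed, if $G=\varinjlim G_\alpha$ with each $G_\alpha\in\mathcal{GI}$, then $G^+=\varprojlim G_\alpha^+$, so the Gorenstein flat modules $G_\alpha^+$ only assemble into an \emph{inverse} limit, and $\mathcal{GF}$ is not closed under inverse limits. This is the reason I would route the argument through the dualizing complex, where both functors preserve direct limits, rather than through character duality by itself. The remaining technical care is to check that the Foxby equivalence restricts to an honest correspondence between the module-level classes $\mathcal{GF}$ and $\mathcal{GI}$ (a degree and boundedness verification, since for a dualizing complex the equivalence is shifted by its amplitude), after which Proposition 4 yields that $\mathcal{GI}$ is covering, as claimed.
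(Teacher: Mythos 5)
Your reduction is exactly the paper's: since a commutative noetherian ring is two sided noetherian, Proposition 4 reduces the corollary to showing that $\mathcal{GI}$ is closed under direct limits. The paper disposes of that step by citation (Christensen--Frankild--Holm, Theorem 6.9, which is exactly this closure statement), so the only substantive content of your proposal is your sketched proof of the closure, and that sketch has a genuine gap.

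The gap is the claim that the Foxby equivalence $(D\otimes^{\mathbf{L}}_R -,\ \mathrm{RHom}_R(D,-))$ between the Auslander class $\mathcal{A}(R)$ and the Bass class $\mathcal{B}(R)$ ``restricts to an honest correspondence between the module-level classes $\mathcal{GF}$ and $\mathcal{GI}$,'' which you defer as a degree and boundedness verification. For a dualizing \emph{complex} no such module-level correspondence exists. If $M$ is Gorenstein flat, $D\otimes^{\mathbf{L}}_R M$ is a complex whose homology is in general spread over several degrees, not a shifted Gorenstein injective module: take $M=R$ (flat, hence Gorenstein flat); then $D\otimes^{\mathbf{L}}_R R\simeq D$, and for local $R$ the complex $D$ is quasi-isomorphic to a shifted module precisely when $R$ is Cohen--Macaulay. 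So for a non-Cohen--Macaulay ring with a dualizing complex (e.g. a suitable quotient of a Gorenstein ring) the claimed correspondence already fails on $M=R$, and no uniform shift repairs it; the module-level Foxby duality $\mathcal{GF}\leftrightarrow\mathcal{GI}$ of Enochs--Jenda--Xu genuinely requires a dualizing \emph{module}, i.e. the Cohen--Macaulay case. What your transport argument legitimately yields (your finite-presentation argument that $\mathrm{RHom}_R(D,-)$ commutes with filtered colimits is fine) is that a direct limit $G=\varinjlim G_\alpha$ of Gorenstein injectives lies in $\mathcal{B}(R)$, i.e. has \emph{finite} Gorenstein injective dimension. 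But upgrading finite Gorenstein injective dimension to Gorenstein injectivity requires in addition that $Ext^i(E,G)=0$ for every injective module $E$ and every $i\geq 1$, and this vanishing is exactly the point that does not pass through direct limits for free, since injective modules are not finitely presented. Your argument stops short of that step, which is the real content of the closure theorem you set out to prove; also note that the character-module hypothesis you invoke at the outset is never actually used in your Foxby argument. Had you simply quoted the closure of $\mathcal{GI}$ under direct limits as known input, your proof would coincide with the paper's.
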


\begin{proof}
By \cite{christensen:06:gorenstein}, Theorem 6.9, $\mathcal{GI}$ is closed under direct limits. By Proposition 4, it is covering.
\end{proof}

We extend our results to the category of complexes of $R$-modules over a two sided noetherian ring $R$. We will use the notation $\mathcal{GI}(C)$ for the class of Gorenstein injective complexes.

It is known that when $R$ is a left noetherian ring, a complex of
left $R$-modules is Gorenstein injective if and only if each component is
a Gorenstein injective $R$-module (\cite{liu:09:gorinj}, Theorem 8). Using
this result we prove:\\

\begin{proposition}
Let $R$ be a two sided noetherian ring. If the class of Gorenstein injective
$R$-modules is closed under filtrations then the class of Gorenstein
injective complexes is precovering in $Ch(R)$.
\end{proposition}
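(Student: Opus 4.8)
The plan is to reduce the problem to the module case via the same deconstructibility machinery that powers Theorem 1: I will show that the class $\mathcal{GI}(C)$ of Gorenstein injective complexes coincides with $Filt(\mathcal{S})$ for a \emph{set} $\mathcal{S}$ of complexes, and then conclude, by \cite{enochs:11:filt}, Theorem 5.5, or \cite{stovicek:11:deconstructibility}, that $Filt(\mathcal{S})$, and hence $\mathcal{GI}(C)$, is precovering. The bridge between modules and complexes is Liu's theorem (\cite{liu:09:gorinj}, Theorem 8): since $R$ is left noetherian, a complex is Gorenstein injective if and only if each of its components is a Gorenstein injective $R$-module. This lets me convert every statement about complexes into a degreewise statement about modules, and it is precisely where two-sidedness is used (left noetherianness for Liu, right noetherianness for the Kaplansky property of Lemma \ref{GIKap}).

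First I would verify that $\mathcal{GI}(C)$ inherits from $\mathcal{GI}$ the three closure properties that drive the proof of Proposition \ref{ppGI}. Closure under extensions and under cokernels of monomorphisms pass degreewise from $\mathcal{GI}$ to $\mathcal{GI}(C)$ by Liu's theorem. For closure under filtrations, given a filtration $(C^\alpha \mid \alpha \le \lambda)$ of a complex with Gorenstein injective successive quotients $C^{\alpha+1}/C^\alpha$, I fix a degree $n$: then $(C^\alpha_n \mid \alpha \le \lambda)$ is a filtration of $C_n$ whose quotients $(C^{\alpha+1}/C^\alpha)_n$ are Gorenstein injective modules (Liu again), so the hypothesis that $\mathcal{GI}$ is closed under filtrations forces $C_n \in \mathcal{GI}$ for every $n$, whence $C \in \mathcal{GI}(C)$.

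The crux, and the step I expect to be the main obstacle, is to promote the $\kappa$-Kaplansky property of $\mathcal{GI}$ (Lemma \ref{GIKap}) to a Kaplansky property of $\mathcal{GI}(C)$. Given a nonzero $C \in \mathcal{GI}(C)$ and a $\kappa$-generated subcomplex $B \subseteq C$, applying the module-level Kaplansky property separately in each degree produces $\kappa$-presentable submodules with Gorenstein injective quotients, but these need not assemble into a \emph{subcomplex}, because the differential need not carry the chosen piece in degree $n$ into the chosen piece in degree $n-1$. I would resolve this by a back-and-forth construction: in each degree build an increasing chain $D_n^{(0)} \subseteq D_n^{(1)} \subseteq \cdots$, where $D_n^{(0)}$ is obtained by applying the module Kaplansky property to $B_n$, and $D_n^{(k+1)}$ is obtained by applying it to the $\kappa$-generated submodule $D_n^{(k)} + d(D_{n+1}^{(k)})$. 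Setting $D_n = \bigcup_k D_n^{(k)}$ gives $d(D_n) \subseteq D_{n-1}$, so $D = (D_n)$ is a subcomplex containing $B$, and the regularity of $\kappa$ (which we may take uncountable) keeps each $D_n$, a union of $\omega$ many $\kappa$-presentable modules, $\kappa$-presentable.

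It remains to check that $D$ and $C/D$ are Gorenstein injective, which is exactly where the closure properties re-enter. In each degree the chain $0 \subseteq D_n^{(0)} \subseteq D_n^{(1)} \subseteq \cdots$ is a filtration of $D_n$ whose successive quotients $D_n^{(k+1)}/D_n^{(k)}$ are cokernels of monomorphisms between Gorenstein injective modules, hence Gorenstein injective; closure under filtrations then gives $D_n \in \mathcal{GI}$. The parallel chain $(D_n^{(k)}/D_n^{(0)})$ inside $C_n/D_n^{(0)}$ shows $D_n/D_n^{(0)} \in \mathcal{GI}$, and the short exact sequence $0 \to D_n/D_n^{(0)} \to C_n/D_n^{(0)} \to C_n/D_n \to 0$, together with closure under cokernels of monomorphisms, yields $C_n/D_n \in \mathcal{GI}$. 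By Liu's theorem $D$ and $C/D$ are Gorenstein injective complexes, so $\mathcal{GI}(C)$ is a $\kappa$-Kaplansky class. With closure under filtrations already established, the argument of Proposition \ref{ppGI} now applies verbatim to complexes and gives $\mathcal{GI}(C) = Filt(\mathcal{S})$, where $\mathcal{S}$ is a set of representatives of the isomorphism classes of Gorenstein injective complexes of cardinality at most $\kappa$; the deconstructibility theorem then finishes the proof.
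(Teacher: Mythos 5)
Your proof is correct, and it reaches the same destination as the paper --- deconstructibility of $\mathcal{GI}(C)$ followed by the theorem that $Filt$ of a set is precovering, with Liu's theorem as the bridge between complexes and modules --- but the middle of your argument is genuinely different in execution. The paper gets deconstructibility of the class of complexes with Gorenstein injective components in one stroke by citing \v{S}\v{t}ov\'{i}\v{c}ek's Proposition 4.3 (a deconstructible class of modules yields a deconstructible class of complexes, filtered by \emph{bounded below} complexes with components in $\mathcal{X}$), whereas you re-prove the needed special case by hand: the zig-zag construction $D_n^{(k+1)} \supseteq D_n^{(k)} + d(D_{n+1}^{(k)})$ to close the degreewise Kaplansky choices under the differential, the filtration-plus-coresolving argument showing $D_n$ and $C_n/D_n$ stay Gorenstein injective, and then a rerun of Proposition \ref{ppGI} at the level of complexes to get $\mathcal{GI}(C)=Filt(\mathcal{S})$. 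Your version is self-contained and makes visible the combinatorics that the citation hides, at the cost of length and of some care with cardinal arithmetic (your hedge that $\kappa$ may be taken uncountable regular is justified, since the Kaplansky property of $\mathcal{GI}$ from \cite{enochs:02:kaplansky} holds for arbitrarily large regular cardinals; alternatively, with the cardinality reading $|M|\le\kappa$ used for $\mathcal{X}$, countable unions cause no problem for any infinite $\kappa$); it also leans on closure of $\mathcal{GI}$ under cokernels of monomorphisms, which is Holm's coresolving property and is already used in the paper's own Proposition \ref{ppGI}. The paper's route buys brevity and a finer filtration (by small bounded-below complexes), while yours buys independence from Proposition 4.3 of \cite{stovicek:11:deconstructibility}, needing only the precovering theorem for $Filt$ of a set.
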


\begin{proof}
Let again $\kappa$ be an infinite regular cardinal as
in Lemma \ref{GIKap}, and let $\mathcal{X}$ denote a set of
representatives of isomorphism classes of Gorenstein injective
modules $M$ such that $|M|\leq \kappa$. Then $\mathcal{GI} = Filt(\mathcal{X})$.
Since $\mathcal{GI}$ is closed under filtrations, it follows that each
 complex of Gorenstein injective modules is filtered by bounded below complexes
with components in $\mathcal{X}$ (\cite{stovicek:11:deconstructibility}, Proposition
4.3). In particular, the class of complexes of Gorenstein injective modules is deconstructible. By \cite{liu:09:gorinj} Theorem 8, this is the class of Gorenstein
injective complexes. By \cite{stovicek:11:deconstructibility},
Theorem (page 2), the class of Gorenstein injective complexes is
precovering.
\end{proof}

%We show that when $R$ is two sided noetherian and the class of Gorenstein
%injective modules is closed under direct limits, the class of
%Gorenstein injective complexes is covering in $Ch(R)$.

%We will need to recall the following result:\\

%\begin{theorem}(\cite{enochs:11:gorenstein complexes}, Theorem 1)
%Let $\mathcal{C}$ be a class of $R$-modules. If $\mathcal{C}$ is
%covering, and if $\mathcal{C}$ is closed under direct limits, direct
%products and extensions, then the class $dw \mathcal{C}$ of
%complexes of $\mathcal{C}$-modules is covering in $Ch(R)$.
%\end{theorem}

\begin{proposition}
Let $R$ be two sided noetherian. If the class of Gorenstein injective modules
is closed under direct limits, then the class of Gorenstein
injective complexes is covering in $Ch(R)$.
\end{proposition}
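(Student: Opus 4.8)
The plan is to follow the same two-step strategy that proved the module statement in the fourth proposition above: first establish that the class $\mathcal{GI}(C)$ of Gorenstein injective complexes is precovering in $Ch(R)$, and then upgrade this to a covering by exploiting closure under direct limits. For the first step I would invoke the preceding proposition, whose hypothesis is that $\mathcal{GI}$ is closed under filtrations. So the immediate task is to deduce closure under filtrations from the assumed closure under direct limits. This follows by a routine transfinite induction along any $\mathcal{GI}$-filtration $(M_\alpha \mid \alpha \le \lambda)$: at a successor stage the short exact sequence $0 \to M_\alpha \to M_{\alpha+1} \to M_{\alpha+1}/M_\alpha \to 0$ exhibits $M_{\alpha+1}$ as an extension of $M_{\alpha+1}/M_\alpha \in \mathcal{GI}$ by $M_\alpha \in \mathcal{GI}$, hence $M_{\alpha+1} \in \mathcal{GI}$ since this class is closed under extensions (a fact already used in the proof of Proposition \ref{ppGI}); at a limit stage $M_\beta = \bigcup_{\alpha < \beta} M_\alpha = \rlim M_\alpha$ lies in $\mathcal{GI}$ by the hypothesis. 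Taking $\alpha = \lambda$ shows $M \in \mathcal{GI}$, so $\mathcal{GI}$ is closed under filtrations and the preceding proposition gives that $\mathcal{GI}(C)$ is precovering.

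For the second step I would show that $\mathcal{GI}(C)$ is itself closed under direct limits in $Ch(R)$. The essential input here is the characterization of \cite{liu:09:gorinj}, Theorem 8: over the left noetherian ring $R$, a complex is Gorenstein injective precisely when every one of its components is a Gorenstein injective module. Since direct limits in $Ch(R)$ are formed degreewise, the $n$-th component of a direct limit of Gorenstein injective complexes is a direct limit of Gorenstein injective modules, hence Gorenstein injective by the hypothesis on $\mathcal{GI}$. Therefore the limit complex has all components in $\mathcal{GI}$ and, by the same characterization, is a Gorenstein injective complex.

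Finally, combining the two steps, $\mathcal{GI}(C)$ is a precovering class that is closed under direct limits, and such a class is automatically covering; this is the argument of \cite{enochs:00:relative}, Corollary 5.2.7, which is valid in any Grothendieck category and in particular in $Ch(R)$. This yields the assertion.

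I expect the only genuinely structural point to be the degreewise analysis in the second step, which rests entirely on the componentwise description of Gorenstein injective complexes of \cite{liu:09:gorinj}; the passage from direct limits to filtrations in the first step, while required to apply the preceding proposition, is a straightforward transfinite induction once closure of $\mathcal{GI}$ under extensions is invoked. A secondary point to verify is that the precovering-plus-direct-limits criterion transfers from the module setting to $Ch(R)$, which it does because $Ch(R)$ is Grothendieck.
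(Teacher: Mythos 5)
Your proof is correct, but it takes a genuinely different route from the paper's. The paper deduces covering in one stroke: by Proposition 4 the module class $\mathcal{GI}$ is covering, and since it is also closed under extensions, direct products, and (by hypothesis) direct limits, Theorem 1 of \cite{enochs:11:gorenstein complexes} yields directly that the class of complexes with Gorenstein injective components is covering in $Ch(R)$; Theorem 8 of \cite{liu:09:gorinj} then identifies this class with $\mathcal{GI}(C)$. You instead decompose the problem into three steps: (i) closure under direct limits gives closure under filtrations (transfinite induction, with extension-closure at successors and the hypothesis at limit stages), so the preceding proposition provides precovers for $\mathcal{GI}(C)$; (ii) since colimits in $Ch(R)$ are computed degreewise, the componentwise characterization of \cite{liu:09:gorinj} shows $\mathcal{GI}(C)$ is closed under direct limits; (iii) the classical criterion that a precovering class closed under direct limits is covering finishes the argument. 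Both arguments hinge on \cite{liu:09:gorinj}, Theorem 8, but the covering mechanism differs. What your route buys is self-containment: it avoids the external result of \cite{enochs:11:gorenstein complexes}, Theorem 1, and does not need closure of $\mathcal{GI}$ under direct products; the cost is that you must justify the two reductions in (i) and (iii), and both are sound --- (i) is the routine induction you describe, and (iii) is exactly the principle the paper itself invokes in $Ch(R)$ in the proof of Proposition 10, valid since $Ch(R)$ is a Grothendieck category.
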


\begin{proof}
%Since $R$ is noetherian and $Gor Inj$ is closed under direct limits,
%the class of Gorenstein injective modules is covering (by
%Proposition 4).
By Proposition 4 the class of Gorenstein injective modules is covering. Also, the class of Gorenstein injective modules is closed under extensions, direct products, and by our assumptions, closed under direct limits. By \cite{enochs:11:gorenstein complexes}, Theorem 1, the class of complexes of Gorenstein
injective modules is covering. By \cite{liu:09:gorinj}, Theorem 8,
these are the Gorenstein injective complexes.
\end{proof}

We give a sufficient condition for the class of Gorenstein injective complexes be covering.
Since it involves Gorenstein flat modules, we recall the following\\
% \begin{definition}
%A module $M$ is Gorenstein projective if there is an exact and $Hom
%(-, Proj)$ exact complex $ \ldots \rightarrow P_1 \rightarrow P_0
%\rightarrow P_{-1} \rightarrow P_{-2}\rightarrow \ldots $ of
%projective modules such that $M = Ker(P_0 \rightarrow P_{-1})$.
%\end{definition}

%The Gorenstein flat modules were introduced by Enochs, Jenda and
%Torrecillas in \cite{enochs:93:gorflat}. They are defined in terms
%of the tensor product:

\begin{definition}
A module $N$ is Gorenstein flat if there is an exact and $Inj
\otimes -$ exact sequence $ \ldots \rightarrow F_1 \rightarrow F_0
\rightarrow F_{-1} \rightarrow F_{-2}\rightarrow \ldots $ of flat
modules such that $N = Ker(F_0 \rightarrow F_{-1})$.
\end{definition}

In \cite{enochs:12:gorenstein.injective.covers}, Theorem 2, we proved the following result: when the ring $R$ is commutative noetherian and with the property that the character modules of the Gorenstein injective modules are Gorenstein flat, the class of Gorenstein injective modules is closed under direct limits, and so it is covering in $R-Mod$.\\

By Proposition 6 and \cite{enochs:12:gorenstein.injective.covers}, Theorem 2, we have :\\
\begin{theorem}
Let $R$ be a commutative noetherian ring. Assume that the character modules of Gorenstein injective modules are Gorenstein flat. Then the class of Gorenstein injective complexes is covering.
\end{theorem}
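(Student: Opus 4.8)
The plan is to derive this statement as an immediate consequence of Proposition 6, once its two hypotheses have been verified in the present setting. Proposition 6 asserts that for a two sided noetherian ring $R$ with the class of Gorenstein injective modules closed under direct limits, the class of Gorenstein injective complexes is covering in $Ch(R)$. So I would reduce the theorem to checking these two conditions and then simply invoke that result.

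First I would observe that a commutative noetherian ring is automatically two sided noetherian, since the left and right module structures coincide; thus the first hypothesis of Proposition 6 is satisfied for free. Next I would establish that $\mathcal{GI}$ is closed under direct limits. This is precisely the content of \cite{enochs:12:gorenstein.injective.covers}, Theorem 2, recalled just above the statement: over a commutative noetherian ring for which the character module $M^+$ of every Gorenstein injective module $M$ is Gorenstein flat, the class of Gorenstein injective modules is closed under direct limits. Since this is exactly the standing hypothesis of the theorem, the second condition of Proposition 6 is met as well.

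With both hypotheses in hand, Proposition 6 applies directly and yields that the class of Gorenstein injective complexes is covering, which completes the argument. The hard part will not be this final deduction but rather the inputs it rests on: the substantive work lies in the cited closure-under-direct-limits result and in the proof of Proposition 6 itself (which in turn relies on \cite{enochs:11:gorenstein complexes}, Theorem 1, together with the componentwise characterization of Gorenstein injective complexes from \cite{liu:09:gorinj}, Theorem 8). The only genuine point of care here is bookkeeping with the hypotheses—confirming that ``commutative noetherian'' discharges the ``two sided noetherian'' requirement of Proposition 6, and that the precise form of direct-limit closure delivered by Theorem 2 is exactly the one Proposition 6 consumes.
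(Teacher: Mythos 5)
Your proposal is correct and matches the paper's own proof exactly: the paper derives this theorem by combining Proposition 6 with Theorem 2 of \cite{enochs:12:gorenstein.injective.covers}, using precisely the observation that the hypothesis supplies closure of $\mathcal{GI}$ under direct limits and that commutative noetherian discharges the two sided noetherian assumption. Nothing is missing.
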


\begin{example}
If the ring $R$ is commutative noetherian with a dualizing complex then the class of Gorenstin injective complexes is covering.
\end{example}

\textbf{Gorenstein injective envelopes of complexes }

In \cite{enochs:12:gorenstein.injective.covers} we proved that the class of Gorenstein injective modules is enveloping over a commutative noetherian ring with the property that the character modules of Gorenstein injective modules are Gorenstein flat. In particular this shows the existence of Gorenstein injective envelopes over commutative noetherian rings with dualizing complexes.

We extend this result to the category of complexes. We will denote by $\mathcal{GI}(C)$ the class of Gorenstein injective complexes, and by $\mathcal{GF}(C)$ the class of Gorenstein flat complexes.\\
 We recall the definition of a Gorenstein flat complex. The definition is given
(\cite{garcia:99:covers}) in terms
of the tensor product of complexes.\\

We recall that if $C$ is a complex of right $R$-modules and $D$ is a
complex of left $R$-modules then the usual tensor product complex of
$C$ and $D$ is the complex of $Z$-modules $C \otimes ^. D$ with $(C
\otimes ^. D)_n = \oplus_{t \in Z} (C_t \otimes_R D_{n-t})$ and
differentials $$\delta (x \otimes y) = \delta^C _t (x) \otimes y +
(-1)^t x
\otimes \delta^D _{n-t}(y)$$ for $x \in C_t$ and $y \in D_{n-t}$.\\

In \cite{garcia:99:covers},  Garc\'{\i}a Rozas introduced another
tensor product: if $C$ is again a complex of right $R$-modules and
$D$ is a complex of left $R$-modules then $C \otimes D$ is defined
to be $\frac{C \otimes ^. D}{B(C \otimes ^. D)}$. Then with the maps
$$\frac{(C \otimes ^. D)_n}{B_n(C \otimes ^. D)} \rightarrow \frac{(C
\otimes ^. D)_{n-1}}{B_{n-1}(C \otimes ^. D)}$$ $x \otimes y
\rightarrow \delta_C (x) \otimes y$, where $x \otimes y$ is used to
denote the coset in $\frac{C \otimes ^. D}{B(C \otimes ^. D)}$ we
get a complex.\\ This is the tensor product used to define
Gorenstein flat complexes.\\

We recall that a complex $F$ is flat if $F$ is exact and if for each
$i \in Z$ the module $Ker (F_i \rightarrow F_{i-1})$ is flat
(\cite{garcia:99:covers}, Theorem 4.1.3).

\begin{definition}(\cite{garcia:99:covers})
A complex $X$ of left $R$-modules is Gorenstein flat if there exists
an exact sequence $$\ldots \rightarrow F_1 \rightarrow F_0
\rightarrow F_{-1} \rightarrow F_{-2}\rightarrow \ldots $$  such
that \\
1) each $F_i$ is a flat complex\\
2) $C= Ker (F_0 \rightarrow F_{-1})$\\
3) the sequence remains exact when $E \otimes -$ is applied for any
injective complex of right $R$-modules $E$.
\end{definition}

We prove first that if the ring $R$ is noetherian then the class of Gorenstein injective complexes is enveloping if and only if its left orthogonal class, $^\bot \mathcal{GI}(C)$ is covering.\\
We start with the following result:\\
\begin{proposition}
Let $R$ be a left noetherian ring. Then $(^\bot \mathcal{GI}(C), \mathcal{GI}(C))$ is a complete hereditary cotorsion pair in the category $Ch(R)$ of complexes of $R$-modules.
\end{proposition}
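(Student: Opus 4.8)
The plan is to reduce the statement to the module level and then lift it to $Ch(R)$ through the componentwise description of Gorenstein injective complexes. Three things must be verified: that $(^\bot\mathcal{GI}(C),\mathcal{GI}(C))$ is a cotorsion pair, that it is complete, and that it is hereditary. The first thing I would record is that, since $R$ is left noetherian, a complex is Gorenstein injective exactly when each of its components is a Gorenstein injective module (\cite{liu:09:gorinj}, Theorem 8). Thus $\mathcal{GI}(C)$ is precisely the \emph{degreewise} class of complexes $X$ with $X_n\in\mathcal{GI}$ for every $n\in\mathbb{Z}$, and the whole problem is to show that this degreewise class is the right half of a complete hereditary cotorsion pair.

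The key input at the module level is that $(^\bot\mathcal{GI},\mathcal{GI})$ is a complete hereditary cotorsion pair in $R-Mod$ which is cogenerated by a \textbf{set}: there is a set $\mathcal{S}$ of modules with $\mathcal{S}^\bot=\mathcal{GI}$ and $^\bot(\mathcal{S}^\bot)={}^\bot\mathcal{GI}$. Granting this, I would pass to complexes through the disk functors $D^n(-)\colon R-Mod\to Ch(R)$. Using the adjunction between $D^n(-)$ and the exact degreewise evaluation functor, together with its first derived version, one has $\ext{Ch(R)}{1}(D^n(A),X)\cong\ext{R}{1}(A,X_n)$ for every module $A$, every complex $X$, and every $n$. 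Hence, setting $\mathcal{T}=\{D^n(s)\mid s\in\mathcal{S},\ n\in\mathbb{Z}\}$, a complex $X$ lies in $\mathcal{T}^\bot$ iff $\ext{R}{1}(s,X_n)=0$ for all $s\in\mathcal{S}$ and all $n$, iff $X_n\in\mathcal{S}^\bot=\mathcal{GI}$ for all $n$, iff $X\in\mathcal{GI}(C)$. Therefore $\mathcal{GI}(C)=\mathcal{T}^\bot$ is cogenerated by the \emph{set} $\mathcal{T}$ in the Grothendieck category $Ch(R)$, and the Eklof--Trlifaj theorem (\cite{eklof:97:homological}) gives that $(^\bot(\mathcal{T}^\bot),\mathcal{T}^\bot)=(^\bot\mathcal{GI}(C),\mathcal{GI}(C))$ is a complete cotorsion pair.

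For the hereditary property it suffices to show $\mathcal{GI}(C)$ is closed under cokernels of monomorphisms. A short exact sequence $0\to X\to Y\to Z\to 0$ in $Ch(R)$ with $X,Y\in\mathcal{GI}(C)$ is degreewise exact, so each $Z_n$ is the cokernel of a monomorphism between Gorenstein injective modules; since $\mathcal{GI}$ is closed under cokernels of monomorphisms, $Z_n\in\mathcal{GI}$ for all $n$, whence $Z\in\mathcal{GI}(C)$. Thus $\mathcal{GI}(C)$ is coresolving and, the pair being complete, it is hereditary.

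The main obstacle is the module-level input: over a left noetherian ring, $(^\bot\mathcal{GI},\mathcal{GI})$ is a complete hereditary cotorsion pair cogenerated by a set. I would emphasize that this concerns special $\mathcal{GI}$-\emph{preenvelopes} rather than precovers, so it is consistent with Gorenstein injective precovers not being known to exist over arbitrary noetherian rings. This input can be taken as the known completeness of the Gorenstein injective cotorsion pair over noetherian rings (cf. \cite{holm:09:cotorsion.pais}); alternatively one derives ``cogenerated by a set'' from a cardinality/deconstructibility argument for $^\bot\mathcal{GI}$ in the spirit of \cite{stovicek:11:deconstructibility}, using the $\kappa$-bound furnished by the Kaplansky property of Lemma \ref{GIKap} and \cite{enochs:02:kaplansky}. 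A secondary point requiring care is the isomorphism $\ext{Ch(R)}{1}(D^n(A),X)\cong\ext{R}{1}(A,X_n)$, which rests on the exactness of the evaluation functor adjoint to $D^n(-)$ and lets one replace the proper class $^\bot\mathcal{GI}$ by the set $\mathcal{S}$.
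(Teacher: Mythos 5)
Your overall architecture is coherent, and the steps you flag as secondary are indeed fine: since $R$ is left noetherian, $\mathcal{GI}(C)$ is the class of degreewise Gorenstein injective complexes (\cite{liu:09:gorinj}, Theorem 8); the isomorphism $\ext{Ch(R)}{1}(D^n(A),X)\cong \ext{R}{1}(A,X_n)$ does hold, because $D^n(-)$ is exact, sends projective modules to projective complexes, and is adjoint to the exact evaluation functor; and your hereditariness argument (degreewise closure of $\mathcal{GI}$ under cokernels of monomorphisms, then dimension shifting against injective complexes --- completeness is not actually needed for that implication) is correct. It is in fact the same dimension-shifting idea as the paper's, which instead uses the sequence $0\to G\to E\to G'\to 0$ furnished directly by the definition of a Gorenstein injective complex.

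The genuine gap is the input you yourself call the ``main obstacle'': that over an arbitrary left noetherian ring $(^\bot\mathcal{GI},\mathcal{GI})$ is a complete cotorsion pair in $R$-Mod \emph{cogenerated by a set}. Neither of your proposed justifications delivers this. Holm--J{\o}rgensen \cite{holm:09:cotorsion.pais} works in the setting of duality pairs and, as used in this very paper (Corollary 3), requires $R$ commutative noetherian \emph{with a dualizing complex}; it says nothing about general left noetherian rings. Your second suggestion conflates two different finiteness conditions: the Kaplansky property of Lemma \ref{GIKap} is a statement about filtering objects of the \emph{right} class $\mathcal{GI}$ by small objects of $\mathcal{GI}$, and even that only yields $\mathcal{GI}=Filt(\mathcal{X})$ (hence precovers) under the additional hypothesis that $\mathcal{GI}$ is closed under filtrations --- which is precisely the unproven hypothesis of the paper's Theorem 1, not a known fact for noetherian rings. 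Cogeneration by a set is a different kind of statement: one must exhibit a \emph{set} $\mathcal{S}\subseteq{}^\bot\mathcal{GI}$ with $\mathcal{S}^\bot=\mathcal{GI}$, i.e.\ one needs control over the \emph{left} class, on which the Kaplansky property of $\mathcal{GI}$ gives no handle. Without such an $\mathcal{S}$ your set $\mathcal{T}=\{D^n(s)\}$ cannot be formed and the Eklof--Trlifaj theorem (which, incidentally, is \cite{ET}, not \cite{eklof:97:homological}) does not apply. The module-level statement you need is itself a nontrivial theorem (completeness over noetherian rings goes back to Krause's work on the homotopy category of injectives), and it, or rather its complex-level analogue, is exactly what the paper imports by citing \cite{gillespie:12:gorenstein complexes}. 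So your reduction does not avoid the hard part of the proposition; it relocates it to an input that your references do not justify, leaving the completeness claim unproved.
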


\begin{proof}

By \cite{gillespie:12:gorenstein complexes}, $(^\bot \mathcal{GI}(C), \mathcal{GI}(C))$ is a complete cotorsion pair whenever R is any (left) Noetherian ring.\\
It remains to show that the pair is hereditary. Let $L \in ^\bot \mathcal{GI}(C)$ and let $G$ be a Gorenstein injective complex. Then there exists an exact sequence of complexes $0 \rightarrow G \rightarrow E \rightarrow G' \rightarrow 0$ with $E$ an injective complex, and with $G' \in \mathcal{GI}(C)$. This gives an exact sequence $0=Ext^1(L,G') \rightarrow Ext^2(L,G) \rightarrow Ext^2(L,E) =0$. Thus $Ext^2(L,G)=0$. Similarly $Ext^i(L,G)=0$ for all $i \ge 1$.

\end{proof}

The following result is proved for modules in \cite{enochs:02:gor.flat.covers} (Theorem 1.4). The argument carries to the category of complexes:\\

\begin{theorem}(\cite{enochs:02:gor.flat.covers}, Theorem 1.4)
Let $(\mathcal{L}, \mathcal{C})$ be a hereditary cotorsion pair in $Ch(R)$. Then the following are equivalent:\\
(1) $(\mathcal{L}, \mathcal{C})$ is perfect.\\
(2) Every complex of $R$-modules has a $\mathcal{C}$ envelope and every $C \in \mathcal{C}$ has an $\mathcal{L}$-cover.\\
(3) Every complex of $R$-modules has an $\mathcal{L}$-cover and every $L \in \mathcal{L}$ has a $\mathcal{C}$-envelope.

\end{theorem}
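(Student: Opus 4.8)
The plan is to reduce the statement to two dual nontrivial implications and then to exploit completeness and heredity of the pair. First I would record the standing tools. Since $(\mathcal{L},\mathcal{C})$ is complete, every complex $X$ admits a special $\mathcal{L}$-precover $0\to C\to L\to X\to 0$ with $L\in\mathcal{L}$, $C\in\mathcal{C}$, and a special $\mathcal{C}$-preenvelope $0\to X\to C'\to L'\to 0$ with $C'\in\mathcal{C}$, $L'\in\mathcal{L}$ (Salce's lemma). Because $\mathcal{L}={}^\bot\mathcal{C}$ and $\mathcal{C}=\mathcal{L}^\bot$ are both closed under extensions, Wakamatsu's Lemma is available in $Ch(R)$: every $\mathcal{L}$-cover has kernel in $\mathcal{C}$ and every $\mathcal{C}$-envelope has cokernel in $\mathcal{L}$. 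With these in place, $(1)\Rightarrow(2)$ and $(1)\Rightarrow(3)$ are immediate, since perfectness already furnishes $\mathcal{L}$-covers and $\mathcal{C}$-envelopes for all complexes, in particular for those lying in $\mathcal{C}$ or in $\mathcal{L}$.

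It then remains to prove $(2)\Rightarrow(1)$ and $(3)\Rightarrow(1)$, and these are formally dual (pullbacks versus pushouts, covers versus envelopes), so I would carry out $(2)\Rightarrow(1)$ and dualize. Assuming (2), I must build an $\mathcal{L}$-cover of an arbitrary complex $M$ (the $\mathcal{C}$-envelopes being given). Take the $\mathcal{C}$-envelope $M\to W$, so that $W\in\mathcal{C}$ and, by Wakamatsu, $0\to M\to W\to L'\to 0$ with $L'\in\mathcal{L}$. Since $W\in\mathcal{C}$, hypothesis (2) supplies an $\mathcal{L}$-cover $L_W\to W$ with kernel $C_W\in\mathcal{C}$. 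Forming the pullback $Q=M\times_W L_W$ produces a short exact sequence $0\to C_W\to Q\to M\to 0$: here $C_W=\ker(L_W\to W)\in\mathcal{C}$, and from the induced $0\to Q\to L_W\to L'\to 0$ together with heredity (so that $Ext^2(L',-)$ vanishes on $\mathcal{C}$, whence $Ext^1(Q,-)$ vanishes on $\mathcal{C}$) one gets $Q\in\mathcal{L}$. Thus $Q\to M$ is at least a special $\mathcal{L}$-precover of $M$.

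The hard part, which I expect to be the main obstacle, is to show that $Q\to M$ is left minimal, hence an actual $\mathcal{L}$-cover. Here I would feed in the two pieces of minimality at hand: the cover $L_W\to W$ is left minimal and the envelope $M\to W$ is left minimal. Concretely, given $f\colon Q\to Q$ with $(Q\to M)\circ f=(Q\to M)$, I would transport $f$ through the pullback square, compare the two resulting composites $Q\to L_W$, use left minimality of $L_W\to W$ to control the $L_W$-component and minimality of $M\to W$ to control the $W$-component, and conclude that the endomorphism induced on the kernel $C_W$ is invertible, so that $f$ is an automorphism of $Q$. Checking that the maps induced on all three rows of the diagram cohere, and that the two separate minimality conditions genuinely combine to force invertibility, is the delicate step. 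Finally, $(3)\Rightarrow(1)$ follows by the dual construction: starting from an $\mathcal{L}$-cover $L\to M$ (available for all complexes) and the $\mathcal{C}$-envelope of $L\in\mathcal{L}$, I would form the pushout to obtain a special $\mathcal{C}$-preenvelope of $M$ and verify its right minimality by the dual argument. Throughout, the only ambient facts used are that $Ch(R)$ is abelian with the long exact $Ext$-sequence, that the pair is complete and hereditary, and Wakamatsu's Lemma, all of which hold verbatim as in the module setting; this is precisely why the module proof of Theorem 1.4 transfers to complexes.
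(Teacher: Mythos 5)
Your overall strategy is the right one: it is precisely the module-case argument of Enochs--Jenda--L\'opez-Ramos transplanted to $Ch(R)$, which is all the paper itself does (it offers no proof beyond citing the module case and asserting the argument carries over). Your reduction to $(2)\Rightarrow(1)$ plus its dual, and your construction of the special $\mathcal{L}$-precover --- pull back the $\mathcal{L}$-cover $\psi\colon L_W\to W$ of the $\mathcal{C}$-envelope $\mu\colon M\to W$, use Wakamatsu twice to get $L'=\mathrm{coker}\,\mu\in\mathcal{L}$ and $C_W=\ker\psi\in\mathcal{C}$, and use heredity on $0\to Q\to L_W\to L'\to 0$ to get $Q\in\mathcal{L}$ --- are correct. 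But the minimality step, which you rightly flag as the crux, has a genuine gap: minimality of $\psi$ can only be applied to an \emph{endomorphism of} $L_W$ satisfying $\psi h=\psi$, whereas transporting $f\colon Q\to Q$ (with $\pi f=\pi$) through the pullback square only produces maps \emph{out of} $Q$, namely $g$ and $gf\colon Q\to L_W$. Your sketch supplies no mechanism for manufacturing such an $h$, and the suggestion that minimality of the envelope $M\to W$ ``controls the $W$-component'' is misdirected: the envelope's minimality is never used in this step beyond its Wakamatsu consequence $L'\in\mathcal{L}$.

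The missing idea is an extension argument that uses the two Wakamatsu outputs \emph{together}. Since $\psi(gf-g)=\mu\pi f-\mu\pi=0$, the difference factors as $gf-g=i\circ d$ with $d\colon Q\to C_W$ and $i\colon C_W\to L_W$ the kernel inclusion. Applying $\Hom(-,C_W)$ to $0\to Q\xrightarrow{\,g\,} L_W\to L'\to 0$ and using $Ext^1(L',C_W)=0$ (orthogonality of the pair, since $L'\in\mathcal{L}$ and $C_W\in\mathcal{C}$) shows $d$ extends to $e\colon L_W\to C_W$ with $eg=d$. Then $h:=1_{L_W}+i\circ e$ satisfies $\psi h=\psi$ (as $\psi i=0$) and $hg=gf$; minimality of $\psi$ makes $h$ an automorphism, and the universal property of the pullback shows the map induced by $(\pi,\,h^{-1}g)$ is a two-sided inverse of $f$ (equivalently: $h$ restricts on $C_W$ to the map $f'$ induced by $f$, so $f'$ is an automorphism and the five lemma on $0\to C_W\to Q\to M\to 0$ finishes). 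With this inserted, your argument is complete, and the dual version settles $(3)\Rightarrow(1)$. One further correction: delete the opening assertion that $(\mathcal{L},\mathcal{C})$ is complete --- completeness is not a hypothesis and is not known for an arbitrary hereditary cotorsion pair in $Ch(R)$; fortunately your argument never actually needs it, since hypotheses $(2)$/$(3)$ plus Wakamatsu's Lemma provide all the special approximations used.
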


We use this result to prove the following\\

\begin{theorem}
Let $R$ be a noetherian ring. The following are equivalent:\\
(1) The cotorsion pair $(^\bot \mathcal{GI}(C), \mathcal{GI})$ is perfect.\\
(2) The class $^\bot \mathcal{GI}(C)$ is covering.\\
(3) The class $\mathcal{GI}(C)$ is enveloping.
\end{theorem}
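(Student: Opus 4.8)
By the previous Proposition, $({}^\bot\mathcal{GI}(C),\mathcal{GI}(C))$ is a complete hereditary cotorsion pair in $Ch(R)$, so the plan is to run everything through the preceding Theorem (\cite[Theorem 1.4]{enochs:02:gor.flat.covers}), applied with $\mathcal{L}={}^\bot\mathcal{GI}(C)$ and $\mathcal{C}=\mathcal{GI}(C)$. The implications $(1)\Rightarrow(2)$ and $(1)\Rightarrow(3)$ are the easy ones: a perfect cotorsion pair has a covering left class and an enveloping right class, which is precisely what (2) and (3) assert (and in any case this follows from the cited Theorem, both of whose conditions are consequences of (1)). The substance is the two converses, namely that the covering of ${}^\bot\mathcal{GI}(C)$ and the enveloping of $\mathcal{GI}(C)$ each force perfectness.

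For $(2)\Rightarrow(1)$ I would match the hypothesis against condition (3) of the cited Theorem. Its first requirement, that every complex have an $\mathcal{L}$-cover, is exactly (2); so it remains only to verify the second requirement, that every $L\in{}^\bot\mathcal{GI}(C)$ admit a $\mathcal{GI}(C)$-envelope, after which the Theorem yields perfectness. Dually, for $(3)\Rightarrow(1)$ I would match against condition (2): its clause ``every complex has a $\mathcal{C}$-envelope'' is (3), and the missing piece is that every Gorenstein injective complex admit a ${}^\bot\mathcal{GI}(C)$-cover. Note that neither missing clause is formally implied by the hypothesis it accompanies---(2) gives covers of all complexes but says nothing a priori about $\mathcal{C}$-envelopes of objects of $\mathcal{L}$, and symmetrically for (3)---so these really must be proved.

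Supplying these two ``cross'' clauses is the step I expect to be the main obstacle. Completeness of the pair furnishes, for each $L\in\mathcal{L}$, a special $\mathcal{C}$-preenvelope $0\to L\to C\to L'\to 0$ with $C\in\mathcal{C}$ and $L'\in\mathcal{L}$, and dually a special $\mathcal{L}$-precover of each $C\in\mathcal{C}$; the trouble is that such a special (pre)approximation need not be minimal, so it must be sharpened into a genuine envelope, respectively cover. Here I would exploit the covering hypothesis (2) itself: since ${}^\bot\mathcal{GI}(C)$ is closed under extensions, Wakamatsu's Lemma makes every $\mathcal{L}$-cover special, so (2) delivers a functorial supply of minimal special $\mathcal{L}$-precovers, which I would use to trim the special $\mathcal{C}$-preenvelope of $L$ down to an actual $\mathcal{C}$-envelope. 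Throughout, the hereditary property and the identification of the core ${}^\bot\mathcal{GI}(C)\cap\mathcal{GI}(C)$ with the injective complexes (an object that is simultaneously Gorenstein injective and left-orthogonal to $\mathcal{GI}(C)$ is a direct summand of an injective complex, hence injective) keep the intervening $Ext$ groups under control. The enveloping hypothesis (3) feeds the mirror-image construction to produce the ${}^\bot\mathcal{GI}(C)$-cover of a prescribed Gorenstein injective complex. I anticipate that essentially all the work lies in this minimality bookkeeping rather than in any new homological input.
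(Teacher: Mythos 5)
Your reduction to the cited Theorem (Theorem 3 of the paper) is exactly the paper's skeleton: (1) yields (2) and (3) by definition, and the converses require the two ``cross'' clauses --- a $\mathcal{GI}(C)$-envelope for each $L\in{}^\bot\mathcal{GI}(C)$, and a ${}^\bot\mathcal{GI}(C)$-cover for each $X\in\mathcal{GI}(C)$. You also correctly isolate the structural fact that ${}^\bot\mathcal{GI}(C)\cap\mathcal{GI}(C)$ is the class of injective complexes. But your plan for actually producing the envelope and the cover is a gap, not a proof: ``trimming'' a special preenvelope down to an envelope has no general mechanism (preenveloping classes need not be enveloping), and the covering hypothesis (2) hands you ${}^\bot\mathcal{GI}(C)$-covers of arbitrary complexes, which does not interact in any evident way with minimizing a $\mathcal{GI}(C)$-preenvelope of $L$; covers are moreover unique up to isomorphism but not functorial, so there is no ``functorial supply'' to exploit.

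The idea you are missing is that the minimal approximations are already available: they are the classical injective envelope and injective cover. For $(2)\Rightarrow(1)$: let $L\to E$ be the injective envelope with cokernel $Y$, and let $0\to L\to G\to C\to 0$ be the special $\mathcal{GI}(C)$-preenvelope furnished by completeness, so $C\in{}^\bot\mathcal{GI}(C)$. Closure of ${}^\bot\mathcal{GI}(C)$ under extensions puts $G$ in the core, so $G$ is injective; comparing the two sequences yields an exact sequence $0\to E\to G\oplus Y\to C\to 0$, which splits since $E$ is injective, whence $Y$ is a direct summand of $C$ and thus $Y\in{}^\bot\mathcal{GI}(C)$. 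So $L\to E$ is a \emph{special} $\mathcal{GI}(C)$-preenvelope, and it is an envelope because injective envelopes are minimal. Dually for $(3)\Rightarrow(1)$: the injective cover $E\to X$ (which exists because $R$ is noetherian) has kernel $I$, and comparison with a sequence $0\to G\to U\to X\to 0$ coming from the definition of Gorenstein injectivity shows $I$ is a direct summand of $G$, hence Gorenstein injective; so $E\to X$ is a special ${}^\bot\mathcal{GI}(C)$-precover, and minimality is again inherited from the injective cover. Note in particular that both cross clauses hold \emph{unconditionally} over a noetherian ring --- contrary to your expectation, hypotheses (2) and (3) are needed only to supply the other half of the cited Theorem's conditions, not to build these envelopes and covers.
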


\begin{proof}
(1) $\Rightarrow$ (2) and (1) $\Rightarrow$ (3) are immediate from the definition.\\
(2) $\Rightarrow$ (1) By Theorem 3 it suffices to prove that every complex $L$ in $^\bot \mathcal{GI}(C)$ has a Gorenstein injective envelope.\\
Let $L \in ^\bot \mathcal{GI} (C)$, and let $L \rightarrow E$ be an injective envelope. Then
%$E \in ^\bot \mathcal{GI}(C)$ and
we have an exact sequence $0 \rightarrow L \rightarrow E \rightarrow Y \rightarrow 0$.
%Since both $E$ and $L$ are in $^\bot \mathcal{GI}(C)$ and since the pair $(^\bot \mathcal{GI}(C), \mathcal{GI}(C))$ is hereditary, it follows that $Y \in ^\bot \mathcal{GI}(C)$.
Also, by Proposition 7, there exists an exact sequence $0 \rightarrow L \rightarrow G \rightarrow C \rightarrow 0$ with $L \rightarrow G$ a Gorenstein injective preenvelope and with $C \in ^\bot \mathcal{GI}(C)$. But then $G \in ^\bot \mathcal{GI}(C) \cap \mathcal{GI}(C)$,  so $G$ is an injective complex. Since $L \rightarrow G$ is an injective preenvelope we have $G \simeq E \oplus E'$.\\
We have a commutative diagram:\\

\[
\begin{diagram}
\node{0}\arrow{e}\node{L}\arrow{s,=}\arrow{e}\node{E}\arrow{s}\arrow{e}\node{Y}\arrow{s}\arrow{e}\node{0}\\
\node{0}\arrow{e}\node{L}\arrow{e}\arrow{e}\node{G}\arrow{e}\node{C}\arrow{e}\node{0}
\end{diagram}
\]

This gives an exact sequence $0 \rightarrow E \rightarrow G \oplus Y \rightarrow C \rightarrow 0$. Since $E$ is injective we have $G \oplus Y \simeq  E \oplus C$ and therefore $C \simeq Y \oplus E'$. It follows that $Y \in ^\bot \mathcal{GI}(C)$.
So the sequence $0 \rightarrow L \xrightarrow{i} E \rightarrow Y \rightarrow 0$ is exact with $E$ Gorenstein injective and with $Y \in ^\bot \mathcal{GI}(C)$. Thus $L \rightarrow E$ is a Gorenstein injective preenvelope. Also, any $u:E \rightarrow E$ such that $ui =i$ is an automorphism of $E$ (because $L \rightarrow E$ is an injective envelope).\\
(3) $\Rightarrow$ (1) By Theorem 3, it suffices to show that every complex in $\mathcal{GI}(C)$ has a $^\bot \mathcal{GI}(C)$ cover. \\
Let $X \in \mathcal{GI}(C)$. Since $X \in \mathcal{GI}(C)$ there exists an exact sequence $0 \rightarrow G \rightarrow U \rightarrow X \rightarrow 0$ with $U$ injective and with $G$ Gorenstein injective. Consider also an exact sequence $0 \rightarrow I \rightarrow E \rightarrow X \rightarrow 0$ with $E \rightarrow X$ an injective cover. Then $E \in ^\bot \mathcal{GI}(C)$ and $I \in Inj^\bot$.\\
 Since $U \rightarrow X$ is an injective precover, we have $U \simeq E \oplus E'$. This gives that $G \simeq I \oplus I'$, so $I$ is Gorenstein injective. Thus the sequence $0 \rightarrow I \rightarrow E \xrightarrow{\phi} X \rightarrow 0$  is exact with $E \in ^\bot \mathcal{GI}(C)$ and with $I \in \mathcal{GI}(C)$. Then $E \rightarrow X$ is a $^\bot \mathcal{GI}(C)$ precover. Since any $v:E \rightarrow E$ such that

$\phi v = \phi$ is an automorphism of $E$, it follows that $E \xrightarrow{\phi} X$ is a $^\bot \mathcal{GI}(C)$ cover.

\end{proof}

For the following result we recall some definitions from \cite{garcia:99:covers}.\\
Given two complexes $C$ and $D$, let $\underline{Hom}(C,D) = Z (\mathcal{H}om(C,D)$. $\underline{Hom}(C,D))$ can be made into a complex with $\underline{Hom}(C,D)_m $ the abelian group of morphisms from $C$ to $D[m]$ and with a boundary operator given by: if $f \in \underline{Hom}(C,D)_m$ then $\delta_m(f):C \rightarrow D[m+1]$ with $\delta_m(f)_n = (-1)^m \delta_D f^n$, for any $n \in \mathbb{Z}$.\\
The right derived functors of $\underline{Hom}(C,D)$ are complexes denoted $\underline{Ext^i}(C,D)$.\\ $\underline{Ext^i}(C,D)= \ldots \rightarrow Ext^i(C, D[n+1]) \rightarrow Ext^i(C, D[n]) \rightarrow \ldots$, with boundary operator induced by the boundary operator of $D$.\\
The right derived functors of the tensor product $- \otimes -$ are denoted by $Tor_i(-,-)$.

We prove that a complex $K$ is in the left orthogonal class of $\mathcal{GI}(C)$ if and only if the complex $K^+$ is in the right orthogonal class of $\mathcal{GF}(C)$:\\
\begin{proposition}
Let $R$ be a commutative noetherian ring with the property that the character modules of Gorenstein injective modules are Gorenstein flat. Then a complex $K$ is in $^\bot \mathcal{GI}(C)$ if and only if $K^+ \in \mathcal{GF}(C)^\bot$.
\end{proposition}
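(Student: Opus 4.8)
The plan is to convert both membership conditions into the single statement that a suitable first $Tor$ vanishes, using the character-module duality for complexes, and then to match the two test classes via the correspondence between Gorenstein flat and Gorenstein injective complexes furnished by $(-)^+$.

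First I would record the two facts I intend to use repeatedly. The first is duality: the García Rozas adjunction between the tensor product $\otimes$ and the internal $\underline{Hom}$ of complexes, after deriving and using that $\mathbb{Q}/\mathbb{Z}$ is injective (so that $(-)^+$ is exact and faithful), yields natural isomorphisms $Ext^1(F,K^+)\cong Tor_1(F,K)^+\cong Ext^1(K,F^+)$ for a complex $F$ of right $R$-modules and $K$ a complex of left $R$-modules, together with the analogous isomorphisms in every degree and on all shifts $K[n]$, i.e. at the level of the $\underline{Ext^i}$ and $Tor_i$ introduced above. In particular $Ext^1(F,K^+)=0$ if and only if $Tor_1(F,K)=0$ if and only if $Ext^1(K,F^+)=0$. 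The second fact is that $(-)^+$ interchanges the two classes: a Gorenstein flat module has Gorenstein injective character module (a known result of Holm), so a Gorenstein flat complex $F$ satisfies $F^+\in\mathcal{GI}(C)$; conversely, by the standing hypothesis the character module of a Gorenstein injective module is Gorenstein flat, so a Gorenstein injective complex $G$ satisfies $G^+\in\mathcal{GF}(C)$. Both statements reduce to the module level by \cite{liu:09:gorinj}, Theorem 8, and the analogous componentwise description of Gorenstein flat complexes.

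With these in hand the proposition is equivalent to the assertion that $Ext^1(K,G)=0$ for all $G\in\mathcal{GI}(C)$ if and only if $Tor_1(F,K)=0$ for all $F\in\mathcal{GF}(C)$. The forward implication is immediate: given $F\in\mathcal{GF}(C)$ the second fact gives $F^+\in\mathcal{GI}(C)$, so $Ext^1(K,F^+)=0$, whence $Tor_1(F,K)=0$ by duality. For the reverse implication, given $G\in\mathcal{GI}(C)$ I would use the second fact to get $G^+\in\mathcal{GF}(C)$, so the hypothesis yields $Tor_1(G^+,K)=0$ and hence $Ext^1(K,G^{++})=0$ by duality, since $G^{++}=(G^+)^+$. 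It then remains to pass from the double character $G^{++}$ back to $G$. Here I would use the canonical pure monomorphism $G\to G^{++}$; because $G^{++}$ is the character of the Gorenstein flat complex $G^+$ it lies in $\mathcal{GI}(C)$, and because $\mathcal{GI}(C)$ is closed under cokernels of monomorphisms the cokernel is again Gorenstein injective, so the construction iterates into a coresolution of $G$ by double characters with Gorenstein injective syzygies.

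The main obstacle is precisely this last step: deducing $Ext^1(K,G)=0$ from the vanishing of $Ext^1(K,-)$ on all double characters $G^{++}$. A naive dimension shift along the coresolution is circular, since $Hom(K,-)$ need not be exact on the pure sequences involved, and the pure embedding $G\to G^{++}$ does not split in general (Gorenstein injective complexes need not be pure-injective). I expect to resolve it by invoking the closure of $\mathcal{GI}(C)$ under direct limits, which the standing hypothesis guarantees through \cite{enochs:12:gorenstein.injective.covers}, Theorem 2, combined with \cite{liu:09:gorinj}, so that the pure-exact sequences, being direct limits of split sequences, become tractable; alternatively I would import the corresponding module-level characterization of ${}^{\bot}\mathcal{GI}$ by $Tor$-vanishing against Gorenstein flats and lift it componentwise. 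Once $Ext^1(K,G)=0$ for every $G\in\mathcal{GI}(C)$ is established, the equivalence with $K^+\in\mathcal{GF}(C)^{\bot}$ follows, completing both directions.
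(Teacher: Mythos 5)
Your ``$\Rightarrow$'' direction is correct and is exactly the paper's argument: for $B\in\mathcal{GF}(C)$ the componentwise criteria over a noetherian ring give $B^+\in\mathcal{GI}(C)$, and the duality $Ext^1(B,K^+)\cong Tor_1(B,K)^+\cong Ext^1(K,B^+)$ finishes. The genuine gap is in your ``$\Leftarrow$'' direction, and it is the one you flagged yourself: duality applied to $G^+\in\mathcal{GF}(C)$ only yields $Ext^1(K,G^{++})=0$, and none of your proposed repairs gets from $G^{++}$ back to $G$. Concretely, from $0\to G\to G^{++}\to C\to 0$ one has the exact sequence $Hom(K,G^{++})\to Hom(K,C)\to Ext^1(K,G)\to Ext^1(K,G^{++})=0$, so $Ext^1(K,G)=0$ precisely when $Hom(K,-)$ carries the pure epimorphism $G^{++}\to C$ to a surjection; that holds when $K$ is pure-projective, but not for an arbitrary complex. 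Closure of $\mathcal{GI}$ (or of $\mathcal{GI}(C)$) under direct limits does not help, because $Ext^1(K,-)$ does not commute with the direct limits presenting a pure-exact sequence as a limit of split ones. Your alternative of ``importing the module-level characterization componentwise'' also fails in principle: membership in ${}^\bot\mathcal{GI}(C)$ is not a degreewise condition --- as the paper's own description of $\widetilde{GorFlat}^\bot$ illustrates, orthogonality for complexes involves, beyond degreewise orthogonality, a homotopy condition on maps of complexes --- so no componentwise statement can close the argument.

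The missing idea, which is how the paper proves ``$\Leftarrow$'', is to use completeness of the cotorsion pair instead of double character modules. By Proposition 7, $({}^\bot\mathcal{GI}(C),\mathcal{GI}(C))$ is a complete hereditary cotorsion pair, so there is an exact sequence $0\to K\to G\to V\to 0$ with $G\in\mathcal{GI}(C)$ and $V\in{}^\bot\mathcal{GI}(C)$. Dualizing once gives $0\to V^+\to G^+\to K^+\to 0$. The forward direction (applied to $V$) gives $V^+\in\mathcal{GF}(C)^\bot$, the hypothesis gives $K^+\in\mathcal{GF}(C)^\bot$, and right orthogonal classes are closed under extensions, so $G^+\in\mathcal{GF}(C)^\bot$. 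But $G^+\in\mathcal{GF}(C)$ by the standing assumption on character modules, so $G^+$ lies in $\mathcal{GF}(C)\cap\mathcal{GF}(C)^\bot$, i.e.\ $G^+$ is a flat complex, whence $G$ is an injective complex. Then $G$, being injective, lies in ${}^\bot\mathcal{GI}(C)$; since $V\in{}^\bot\mathcal{GI}(C)$ as well, hereditariness of the pair (which kills $Ext^2(V,X)$ for $X\in\mathcal{GI}(C)$) forces $K\in{}^\bot\mathcal{GI}(C)$. If you want to keep your outline, this is the substitution to make: replace the pure embedding $G\to G^{++}$ by the special preenvelope $K\to G$ coming from completeness; one application of $(-)^+$ then suffices and the double-dual problem never arises.
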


\begin{proof}
"$\Rightarrow$" Let $B$ be a Gorenstein flat complex. Since $R$ is noetherian this is equivalent to $B_n \in \mathcal{GF}$ for all $n \in \mathbb{Z}$ (\cite{enochs:11:gorenstein complexes}, Lemma 12). Then $B^+$ is a complex of Gorenstein injective modules, so $B^+ \in \mathcal{GI}(C)$ (by \cite{liu:09:gorinj}, Theorem 8). So we have $\underline{Ext}^1(K,B^+)=0$. Then $\underline{Ext}^1(B,K^+) \simeq Tor_1(B,K)^+ \simeq \underline{Ext}^1(K,B^+) =0$. Thus $Ext^1(B, K^+) =0$ for any $B \in \mathcal{GF}$. It follows that $K^+ \in \mathcal{GF}(C)^\bot$.\\

"$\Leftarrow$" Assume that $K^+ \in \mathcal{GF}(C) ^\bot$. \\
Since $R$ is noetherian, by Proposition 7 there exists an exact sequence $0 \rightarrow K \rightarrow G \rightarrow V \rightarrow 0$ with $G$ Gorenstein injective and with $V$ in $^\bot \mathcal{GI}(C)$. Then we have an exact sequence $0 \rightarrow V^+ \rightarrow G^+ \rightarrow K^+ \rightarrow 0$. By the above $V^+ \in \mathcal{GF}(C)^\bot$. By our assumption, $K^+ \in \mathcal{GF}(C)^\bot$. It follows that $G^+ \in \mathcal{GF}(C)^\bot \cap \mathcal{GF}(C)$. But this means that $G^+$ is a flat complex, and therefore $G$ is an injective complex. So we have an exact sequence $0 \rightarrow K \rightarrow G \rightarrow V \rightarrow 0$ with both $G$ and $V$ in $^\bot \mathcal{GI}(C)$. Since the pair $(^\bot\mathcal{GI}(C), \mathcal{GI}(C))$ is hereditary it follows that $ K \in ^\bot \mathcal{GI}(C)$.
\end{proof}

We can prove now:\\
\begin{proposition}
Let $R$ be commutative noetherian and such that the character modules of Gorenstein injective modules are Gorenstein flat. Then the class $^\bot \mathcal{GI}(C)$ is closed under pure quotients.
\end{proposition}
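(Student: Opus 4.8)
The plan is to transfer the problem to character complexes, where a ``pure quotient'' becomes a direct summand, and then to invoke Proposition 9 in both directions. Let $0 \rightarrow L \rightarrow M \rightarrow N \rightarrow 0$ be a pure exact sequence in $Ch(R)$ with $M \in {}^\bot \mathcal{GI}(C)$; the goal is to show that the pure quotient $N$ again lies in $^\bot \mathcal{GI}(C)$.

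First I would apply the character functor $(-)^+ = \Hom_{\mathbb{Z}}(-, \mathbb{Q}/\mathbb{Z})$. The defining property of a pure exact sequence of complexes (purity being taken with respect to the tensor product $\otimes$ of Garc\'{\i}a Rozas) is that it stays exact after tensoring with every complex; exactly as in the module case, this is equivalent to the character dual being \emph{split} exact. Thus $0 \rightarrow N^+ \rightarrow M^+ \rightarrow L^+ \rightarrow 0$ splits, and in particular $N^+$ is a direct summand of $M^+$.

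Next I would feed this into Proposition 9. Since $M \in {}^\bot \mathcal{GI}(C)$, that proposition gives $M^+ \in \mathcal{GF}(C)^\bot$. Any right orthogonal class is closed under direct summands, so $N^+ \in \mathcal{GF}(C)^\bot$ as well. Applying the converse direction of Proposition 9 then yields $N \in {}^\bot \mathcal{GI}(C)$, which is what we want.

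The step I expect to be the main obstacle is the first one: justifying that a pure exact sequence in $Ch(R)$ dualizes, under $(-)^+$, to a \emph{split} exact sequence of character complexes. This forces one to pin down precisely the notion of purity for complexes in force here and to verify that the character functor sends pure monomorphisms of complexes to split epimorphisms, just as it does for modules. I expect this to follow from the adjunction relating $\otimes$ and $\underline{Hom}$ together with the standard module-level purity criterion applied componentwise, but it is the point that must be checked carefully at the level of complexes rather than taken for granted.
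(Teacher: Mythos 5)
Your proof is correct and takes essentially the same route as the paper's: dualize the pure exact sequence to a split exact sequence of character complexes, note that the dual of the middle term lies in $\mathcal{GF}(C)^\bot$ by the duality result (which is Proposition 8 in the paper, not Proposition 9), deduce the same for the direct summand $N^+$, and apply the duality result in the converse direction. The purity-dualization step you flag as the main obstacle is indeed the only ingredient not spelled out, and the paper likewise asserts it without proof.
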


\begin{proof}

Let $0 \rightarrow A \rightarrow B \rightarrow C \rightarrow 0$ be a pure exact sequence of complexes with $B \in ^\bot \mathcal{GI}(C)$. Then the sequence $0 \rightarrow C^+ \rightarrow B^+ \rightarrow A^+ \rightarrow 0$ is split exact. So $B^+ \simeq A^+ \oplus C^+$. By Proposition 8, the complex $B^+$ is in $\mathcal{GF}^\bot$. It follows that both $A^+$ and $C^+$ are in $\mathcal{GF}(C)^\bot$. By Proposition 8 again, $A$ and $C$ are both in $^\bot \mathcal{GI}(C)$.
\end{proof}

%\begin{corollary}
%The class $^\bot \mathcal{GI}(C)$ is closed under direct limits.
%\end{corollary}

%\begin{proof}
%By Theorem , the class $^\bot \mathcal{GI}(C)$ is precovering, so it is closed under direct sums. Since the direct limit of an inductive family is a pure quotient of the direct sum, by Proposition 9, every direct limit of complexes in $^\bot \mathcal{GI}(C)$ is still in $^\bot \mathcal{GI}(C)$.
%\end{proof}

\begin{theorem}
Let $R$ be a commutative noetherian ring such that the character modules of Gorenstein injective modules are Gorenstein flat. Then the class of Gorenstein injective complexes is enveloping in $Ch(R)$.
\end{theorem}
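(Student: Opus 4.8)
The plan is to reduce everything to the characterization in Theorem 4: for $R$ noetherian, the class $\mathcal{GI}(C)$ is enveloping precisely when its left orthogonal $^\bot\mathcal{GI}(C)$ is covering (equivalently, when the cotorsion pair $(^\bot\mathcal{GI}(C), \mathcal{GI}(C))$ is perfect). So it suffices to prove that $^\bot\mathcal{GI}(C)$ is a covering class, and the bulk of the work is to show that this class is closed under direct limits.

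First I would record what is already available. By Proposition 7 the pair $(^\bot\mathcal{GI}(C), \mathcal{GI}(C))$ is a complete (hereditary) cotorsion pair, so its left class $^\bot\mathcal{GI}(C)$ is special precovering; in particular it is precovering. Being the left half of a cotorsion pair, $^\bot\mathcal{GI}(C)$ is moreover closed under arbitrary coproducts (and under extensions).

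Next I would upgrade the closure under pure quotients (Proposition 9) to closure under direct limits. For any directed system $(X_i)_{i \in I}$ of complexes in $^\bot\mathcal{GI}(C)$, the canonical presentation of the colimit gives a pure exact sequence $0 \to \bigoplus_{i \leq j} X_i \to \bigoplus_{i} X_i \to \varinjlim X_i \to 0$, exhibiting $\varinjlim X_i$ as a pure quotient of a coproduct of objects of $^\bot\mathcal{GI}(C)$. Since $^\bot\mathcal{GI}(C)$ is closed under coproducts (left class of a cotorsion pair) and under pure quotients (Proposition 9), it follows that $\varinjlim X_i \in {}^\bot\mathcal{GI}(C)$. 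Thus $^\bot\mathcal{GI}(C)$ is closed under direct limits.

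Finally, a precovering class that is closed under direct limits is covering; this is the complex analogue of \cite{enochs:00:relative}, Corollary 5.2.7, valid in the Grothendieck category $Ch(R)$, and it is exactly the principle invoked in the proof of Proposition 4 at the module level. Hence $^\bot\mathcal{GI}(C)$ is covering, and Theorem 4 yields that $\mathcal{GI}(C)$ is enveloping. The points requiring care are the purity of the canonical presentation of a direct limit (so that Proposition 9 genuinely applies to produce closure under direct limits) together with the transfer of the ``precovering $+$ closed under direct limits $\Rightarrow$ covering'' result from modules to complexes; these are the steps I expect to be the main obstacle.
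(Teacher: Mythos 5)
Your proposal is correct and follows essentially the same route as the paper: reduce via Theorem 4 to showing $^\bot\mathcal{GI}(C)$ is covering, note it is precovering and closed under direct sums by Proposition 7, use Proposition 9 together with the fact that a direct limit is a pure quotient of the direct sum to get closure under direct limits, and conclude covering from precovering plus closure under direct limits. The only difference is that you spell out the canonical pure exact presentation of the colimit and flag the transfer of Corollary 5.2.7 of \cite{enochs:00:relative} to $Ch(R)$, both of which the paper uses implicitly.
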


\begin{proof}
By Theorem 4 it suffices to prove that the class $^\bot \mathcal{GI}(C)$ is covering. \\
By Proposition 7 the class $^\bot \mathcal{GI}(C)$ is precovering, so it is closed under direct sums. Since the direct limit of an inductive family is a pure quotient of the direct sum, by Proposition 9, every direct limit of complexes in $^\bot \mathcal{GI}(C)$ is still in $^\bot \mathcal{GI}(C)$. It follows that $^\bot \mathcal{GI}(C)$ is covering in $Ch(R)$.
\end{proof}

\begin{corollary}
If $R$ is a commutative noetherian ring with a dualizing complex, then every complex of $R$-modules has a Gorenstein injective envelope.
\end{corollary}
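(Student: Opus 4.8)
The plan is to deduce this from the immediately preceding theorem (Theorem 5), which already produces a Gorenstein injective envelope of every complex as soon as $R$ is commutative noetherian and the character module $M^+$ of every Gorenstein injective module $M$ is Gorenstein flat. Thus nothing new has to be constructed here: the whole envelope-producing machinery --- reducing to the statement that $^\bot\mathcal{GI}(C)$ is covering (via Propositions 7--9 and Theorem 4) --- is carried out in Theorem 5. The only point that remains is to verify that a commutative noetherian ring carrying a dualizing complex satisfies the character-module hypothesis, namely that $M$ Gorenstein injective implies $M^+$ Gorenstein flat.

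To verify this hypothesis I would argue as follows. Let $M$ be Gorenstein injective and fix a complete injective resolution $\mathbf{E}\colon \cdots \to E_1 \to E_0 \to E_{-1} \to \cdots$, that is, an exact, $\Hom_R(\mathrm{Inj},-)$-exact complex of injectives with $M = \ker(E_0 \to E_{-1})$. Applying the exact contravariant functor $(-)^+ = \Hom_{\mathbb{Z}}(-,\mathbb{Q}/\mathbb{Z})$ yields an exact complex $\mathbf{E}^+$ in which $M^+$ occurs as a cocycle. Since $R$ is noetherian, the character module of an injective module is flat, so $\mathbf{E}^+$ is an exact complex of flat modules. To conclude that $M^+$ is Gorenstein flat it then remains only to check the total-acyclicity condition: that $I \otimes_R \mathbf{E}^+$ stays exact for every injective $I$. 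Here one transports the $\Hom_R(\mathrm{Inj},-)$-exactness of $\mathbf{E}$ to the $(\mathrm{Inj}\otimes_R -)$-exactness of $\mathbf{E}^+$ through the duality between tensor and character-$\Hom$; this is exactly where the dualizing complex enters, through the Foxby (Auslander--Bass) correspondence identifying the Gorenstein flat and Gorenstein injective modules over such a ring. This is precisely the fact recorded at the level of modules and underlying the Example following Theorem 2: over a commutative noetherian ring with a dualizing complex the character module of every Gorenstein injective module is Gorenstein flat (see \cite{christensen:06:gorenstein} and \cite{enochs:12:gorenstein.injective.covers}).

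With the hypothesis verified, the preceding theorem applies directly and gives a Gorenstein injective envelope for every complex of $R$-modules, which is the assertion.

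I expect the substantive point to be the verification in the second paragraph, and within it the passage from $\Hom_R(\mathrm{Inj},-)$-exactness of $\mathbf{E}$ to $(\mathrm{Inj}\otimes_R -)$-exactness of $\mathbf{E}^+$. Mere noetherianity delivers the flatness of the terms of $\mathbf{E}^+$ but not the total acyclicity: the naive natural map $I \otimes_R E_i^+ \to \Hom_R(I,E_i)^+$ is an isomorphism only for finitely presented $I$, and injectives over a noetherian ring need not be finitely presented. It is the dualizing complex that repairs this gap, which is why the statement is phrased for rings that possess one.
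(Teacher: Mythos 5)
Your proposal is correct and matches the paper's (implicit) argument: the paper states this corollary without proof, as an immediate consequence of the preceding theorem once one knows that over a commutative noetherian ring with a dualizing complex the character module of every Gorenstein injective module is Gorenstein flat --- a fact which the paper, like you, draws from \cite{christensen:06:gorenstein} and \cite{enochs:12:gorenstein.injective.covers} rather than reproving. Your second paragraph sketching why that module-level fact holds (and pinpointing that naive tensor--Hom duality fails for non--finitely presented injectives) goes beyond what the paper records, but since it ultimately defers to the same references, the two proofs coincide in substance.
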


\section{Gorenstein flat and Gorenstein projective precovers for complexes}

We recall the following\\
 \begin{definition}
A module $M$ is Gorenstein projective if there is an exact and $Hom
(-, Proj)$ exact complex $ \ldots \rightarrow P_1 \rightarrow P_0
\rightarrow P_{-1} \rightarrow P_{-2}\rightarrow \ldots $ of
projective modules such that $M = Ker(P_0 \rightarrow P_{-1})$.
\end{definition}

%The Gorenstein flat modules were introduced by Enochs, Jenda and
%Torrecillas in \cite{enochs:93:gorflat}. They are defined in terms
%of the tensor product:

%\begin{definition}
%A module $N$ is Gorenstein flat if there is an exact and $Inj
%\otimes -$ exact sequence $ \ldots \rightarrow F_1 \rightarrow F_0
%\rightarrow F_{-1} \rightarrow F_{-2}\rightarrow \ldots $ of flat
%modules such that $N = Ker(F_0 \rightarrow F_{-1})$.
%\end{definition}

The Gorenstein projective complexes are defined in a similar manner, but working with resolutions of complexes.\\
 We recall that for two complexes $X$ and $Y$, $Hom(X,Y)$ denotes
the group of morphisms of complexes from $X$ to $Y$.\\
We also recall that a complex $P$ is projective if the functor
$Hom(P,-)$ is exact. Equivalently, $P$ is projective if and only if
$P$ is exact and for each $n \in Z$, $Ker (P_n \rightarrow P_{n-1})$
is a projective module. For example, if $M$ is a projective module,
then the complex
$$ \ldots \rightarrow 0 \rightarrow M \xrightarrow{Id} M \rightarrow
0 \rightarrow \ldots$$ is projective. In fact, any projective
complex is uniquely up to isomorphism the direct sum of such
complexes (one such complex for each $n \in Z$).\\

 By \cite{garcia:99:covers}, a complex $D$ is called
Gorenstein projective if there exists an exact sequence of complexes
$$ \ldots \rightarrow P_1 \rightarrow P_0
\rightarrow P_{-1} \rightarrow P_{-2}\rightarrow \ldots $$ such that

1) for each $i \in Z$, $P_i$ is a projective complex\\
2) $Ker (P_0 \rightarrow P_{-1}) = D$\\
3) the sequence remains exact when $Hom(-,P)$ is applied to it for
any projective complex $P$.\\

We prove in this section the existence of Gorenstein flat covers
of complexes over two sided noetherian rings, and the existence of \textbf{special}
Gorenstein projective precovers of complexes over commutative noetherian rings
of finite Krull dimension. This result generalizes and improves Theorem 4.26 of
\cite{murfet:10:totally} in two directions: on one side it is
established for the category $Ch(R)$ of unbounded complexes, and on the other
hand we prove that our Gorenstein precover is {\it special}. This property has
been shown to be crucial in defining the cofibrant and fibrant replacements in
(abelian) model category structures on $Ch(R)$ (see
\cite{gillespie:07:kaplansky}). We
would like to stress that our methods are necessarily different from those of
\cite{murfet:10:totally}.

We begin by proving the existence of Gorenstein flat precovers and covers over two sided noetherian rings.\\

\begin{proposition}
Let $R$ be a two sided noetherian ring. The class of Gorenstein flat complexes is covering in $Ch(R)$.
\end{proposition}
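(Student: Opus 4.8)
The plan is to follow the two-step pattern used for Gorenstein injective complexes in Propositions 4 and 5: first establish that $\mathcal{GF}(C)$ is precovering, then that it is closed under direct limits, and finally conclude that it is covering via \cite{enochs:00:relative}, Corollary 5.2.7 (a precovering class closed under direct limits is covering).

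The key simplification is to pass to components. Since $R$ is two sided noetherian, a complex is Gorenstein flat if and only if each of its terms is a Gorenstein flat $R$-module (\cite{enochs:11:gorenstein complexes}, Lemma 12); hence $\mathcal{GF}(C)$ is precisely the class of complexes with all components in $\mathcal{GF}$. Closure under direct limits is then immediate, since direct limits in $Ch(R)$ are formed termwise and, over the coherent ring $R$, the class $\mathcal{GF}$ of Gorenstein flat modules is itself closed under direct limits (Enochs and L\'opez-Ramos). For the precovering property I would argue as in Proposition 5: over the noetherian ring $R$ the class $\mathcal{GF}$ is deconstructible (being, for instance, the left-hand class of a cotorsion pair cogenerated by a set, equivalently a Kaplansky class closed under filtrations), so there is a set $\mathcal{Y}$ of Gorenstein flat modules of bounded cardinality with $\mathcal{GF} = Filt(\mathcal{Y})$. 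By \cite{stovicek:11:deconstructibility}, Proposition 4.3, every complex with components in $\mathcal{GF}$ is then filtered by bounded below complexes with components in $\mathcal{Y}$, so $\mathcal{GF}(C)$ is deconstructible and therefore precovering by the Theorem on page 2 of \cite{stovicek:11:deconstructibility}. Combining this with closure under direct limits and applying Corollary 5.2.7 of \cite{enochs:00:relative} yields that $\mathcal{GF}(C)$ is covering. Alternatively, one could invoke the transfer result \cite{enochs:11:gorenstein complexes}, Theorem 1, exactly as in the proof of Proposition 6.

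The main obstacle is the precovering step, and beneath it the two module-level facts it rests on: the deconstructibility of $\mathcal{GF}$ (which, via \cite{stovicek:11:deconstructibility}, Proposition 4.3, is what produces a deconstructible, hence precovering, class of complexes) and the closure of $\mathcal{GF}$ under direct limits over coherent rings, which is a genuinely nontrivial theorem. The termwise characterization of Gorenstein flat complexes is the device that transfers both inputs cleanly from $R$-Mod to $Ch(R)$; once it is available, the remainder is the now-familiar deconstructibility-plus-direct-limits argument.
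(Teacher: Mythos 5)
Your proposal is correct and follows essentially the same route as the paper: deconstructibility of $\mathcal{GF}$ (Kaplansky plus closure under direct limits, via \cite{enochs:02:kaplansky}, Proposition 2.10), transfer to complexes by \cite{stovicek:11:deconstructibility}, Proposition 4.3, identification of Gorenstein flat complexes with complexes of Gorenstein flat modules over a two sided noetherian ring, and finally precovering plus closure under direct limits implies covering. The only cosmetic difference is that the paper cites both Lemma 12 and Lemma 13 of \cite{enochs:11:gorenstein complexes} for the termwise characterization, and leaves the appeal to \cite{enochs:00:relative}, Corollary 5.2.7 implicit.
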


\begin{proof}
%The ring $R$ is noetherian, so $(\mathcal{GF}, \mathcal{GF}^\bot)$ is a complete hereditary cotorsion pair in $R-Mod$. Then by \cite{gillespie:08:cotorsion} the pair $(dw \mathcal{GF}, dw \mathcal{GF}^\bot)$ is a hereditary cotorsion pair in $Ch(R)$ (where $dw \mathcal{GF}$ is the class of complexes of Gorenstein flat $R$-modules).\\
By \cite{enochs:02:kaplansky} Proposition 2.10, the class of
Gorenstein flat modules is Kaplansky and closed under direct limits. Then by \cite{stovicek:11:deconstructibility} this class is deconstructible. By \cite{stovicek:11:deconstructibility} Proposition 4.3, the class of complexes of Gorenstein flat modules is deconstructible, so it is precovering. But over a  two sided noetherian ring a complex is Gorenstein flat if and only if it is a complex of
Gorenstein flat modules (\cite{enochs:11:gorenstein complexes}, Lemma 12 and
Lemma 13). So the class of Gorenstein flat complexes
is precovering. This class of complexes is also closed under direct limits, so it is covering.
% By \cite{enochs:02:kaplansky} Proposition 2.10, the class of
%Gorenstein flat modules is Kaplansky. Using this, Enochs and L\'opez-Ramos
%proved (\cite{enochs:02:kaplansky} Corollary 2.11) that the class of Gorenstein flat modules, $GorFlat$, is the left
%half of a perfect cotorsion theory. Since $GorFlat$ is also closed under
%extensions, retracts, and direct limits, it follows (by
%\cite{gillespie:08:cotorsion}, Theorem 5.5) that $(dwGorFlat, (dwGorFlat)^\bot)$
%is a complete cotorsion theory in $Ch(R)$, where $dwGorFlat$ is the class of
%complexes of Gorenstein flat modules.
%But over a  two sided noetherian ring a complex is Gorenstein flat if and only if it is a complex of
%Gorenstein flat modules (\cite{enochs:11:gorenstein complexes}, Lemma 12 and
%Lemma 13). So the class of Gorenstein flat complexes
%is precovering. This class of complexes is also closed under direct limits, so it is covering.
\end{proof}

We consider next the question of the existence of Gorenstein projective precovers for complexes.\\
For modules, Enochs and Jenda showed that when $R$ is a Gorenstein ring the class of Gorenstein projective modules is precovering. Then J{\o}rgensen  showed the existence of Gorenstein projective precovers over commutative noetherian rings with dualizing complexes. Recently, Murfet and Salarian extended his result to commutative noetherian rings of finite Krull dimension.

Their goal in \cite{murfet:10:totally} was to introduce a triangulated category of totally acyclic
complexes of flat modules, which plays the role of $K_{tac}(ProjR)$ for any noetherian ring $R$ (in fact they work in a more general setting, that of complexes of flat sheaves over noetherian schemes).\\
To accomplish this they started with a construction developed by Neeman, who
defined $N(Flat)$ as the Verdier quotient $\frac{K(Flat)}{K_{pac}(Flat)}$,
with $K(Flat)$ the homotopy category of complexes of flat modules, and
$K_{pac}(Flat)$ the full subcategory of pure acyclic complexes in $K(Flat)$ (it
is known that a complex of flat modules is pure acyclic if and only if it is a
flat complex in the sense of Garc\'{\i}a Rozas' definition from [21]). Then they
considered the full subcategory of $N(Flat)$, $N_{tac}(Flat)$, of N-totally
acyclic complexes of flat modules (i.e. exact and $Inj \otimes -$ exact
complexes of flat modules).  Their results in \cite{murfet:10:totally} indicate
that this is the "correct" triangulated category
one can use in order to generalize aspects of
Gorenstein homological algebra to schemes. %(The main obstacle in developing the theory
%is the lack of projective quasi-coherent sheaves over non-affine schemes.
% The key ingredient of their approach is that one should construct the global theory using
%complexes of flat, rather than projective, quasi-coherent sheaves.)\\

We show that when $R$ is noetherian the class of N-totally acyclic
complexes of flat modules is precovering.

In the following we use $\widetilde{GorFlat}$ to denote the class of exact
complexes $F$ with $Z_n(F) \in \mathcal{GF}$ for each $n$. Since the class of
Gorenstein flat modules is Kaplansky and is also closed under direct limits,
extensions and retracts, the class $\widetilde{GorFlat}$ is covering in $Ch(R)$
(by \cite[Theorem 4.12]{gillespie:07:kaplansky}, or see also
\cite[Corollary 2.11]{enochs:02:kaplansky} and \cite[Corollary 3.1]{EEI}).
By \cite{gillespie:07:kaplansky}, its right orthogonal class,
$\widetilde{GorFlat}^\bot$, consists of the complexes $X$ with each $X_n \in
\mathcal{GF}^\bot$ and such that for any $G \in \widetilde{GorFlat}$, every $u \in
Hom(G,X)$ is homotopic to zero.

\begin{proposition}
Let $R$ be a noetherian ring. Then the class of N-totally acyclic complexes of
flat modules is precovering in $Ch(R)$.
\end{proposition}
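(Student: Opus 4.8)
The plan is to reduce the statement to a deconstructibility assertion and then invoke the filtration--precovering theorem already used above. First I would give a module-level description of the class. I claim that over a noetherian ring a complex $F$ is N-totally acyclic with flat components if and only if $F$ is exact, each $F_n$ is flat, and each cycle $Z_n(F)$ is Gorenstein flat. The forward implication is just the definition of a Gorenstein flat module applied to the cycles of a complete flat resolution. For the converse one only has to check that such an $F$ stays exact after applying $Inj\otimes-$: since $R$ is noetherian and each $Z_n(F)\in\mathcal{GF}$, a complete flat resolution of $Z_n(F)$ gives $Tor_1(I,Z_n(F))=0$ for every injective right module $I$, so each short exact sequence $0\to Z_n(F)\to F_n\to Z_{n-1}(F)\to 0$ remains exact under $I\otimes-$, and splicing these yields the $Inj\otimes-$ exactness of $F$. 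Thus the class of N-totally acyclic complexes of flat modules coincides with the class $\mathcal{A}$ of exact complexes of flat modules whose cycles are Gorenstein flat; equivalently $\mathcal{A}=\widetilde{GorFlat}\cap\{F : F_n \text{ flat for all }n\}$.

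Next I would establish that $\mathcal{A}$ is deconstructible. The two module classes entering the description of $\mathcal{A}$ are both Kaplansky classes closed under extensions and direct limits, hence deconstructible: this is standard for the flat modules, and it is \cite[Proposition 2.10]{enochs:02:kaplansky} for the Gorenstein flat modules. Fix a regular cardinal $\kappa$ witnessing the Kaplansky property for both classes simultaneously, and let $\mathcal{S}$ be a set of representatives of the objects of $\mathcal{A}$ of cardinality at most $\kappa$. The goal is to show $\mathcal{A}=Filt(\mathcal{S})$, i.e.\ that every $F\in\mathcal{A}$ is the union of a continuous chain of subcomplexes whose consecutive quotients lie in $\mathcal{S}$. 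This is complex-level deconstructibility of the type proved in \cite[Proposition 4.3]{stovicek:11:deconstructibility} and \cite[Theorem 4.12]{gillespie:07:kaplansky}, now carried out for the mixed condition defining $\mathcal{A}$.

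The main obstacle is precisely this filtration step. Given $F\in\mathcal{A}$ and a subcomplex $S$ generated by at most $\kappa$ elements, I must produce $F'$ with $S\subseteq F'\subseteq F$, $|F'|\le\kappa$, such that both $F'$ and $F/F'$ again lie in $\mathcal{A}$, that is, are exact with flat components and Gorenstein flat cycles. I would build $F'$ by the usual back-and-forth closure: starting from $S$, alternately enlarge using the Kaplansky property of the flat modules so that the chosen components become flat with flat quotient, enlarge using the Kaplansky property of $\mathcal{GF}$ so that the chosen cycles become Gorenstein flat with Gorenstein flat quotient, and close off under the differential so that the resulting subcomplex has exact quotient; iterating countably many times and taking the union gives $F'$ of cardinality $\le\kappa$ with all three properties inherited by both $F'$ and $F/F'$. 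The delicate point is running the two Kaplansky processes compatibly while preserving exactness, since the cycle and component conditions interact through the differentials; this is where the bulk of the verification lies.

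Finally, once $\mathcal{A}=Filt(\mathcal{S})$ for the set $\mathcal{S}$, the conclusion is immediate: by \cite{enochs:11:filt}, Theorem 5.5 (or by \cite{stovicek:11:deconstructibility}, Theorem in the Introduction), the class $Filt(\mathcal{S})$ is precovering in $Ch(R)$. Hence the class of N-totally acyclic complexes of flat modules is precovering.
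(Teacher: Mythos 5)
Your reduction is a genuinely different route from the paper's, and its first step is correct: over any ring an exact complex of flat modules is $Inj\otimes -$ exact exactly when all its cycles are Gorenstein flat, so the class in question is $\mathcal{A}=\widetilde{GorFlat}\cap dw(Flat)$. The paper uses this same identification (in both directions) inside its proof, but it never proves deconstructibility of $\mathcal{A}$. Instead it composes two approximation results: $\widetilde{GorFlat}$ is \emph{covering} in $Ch(R)$ (by the Kaplansky-class results it cites), and the class $dw(Flat)\cap\mathcal{E}$ of exact complexes of flat modules is precovering (by \cite{enochs:11:cotorsion:pairs}). Its key observation is a splitting lemma: if $0\to K\to F\to P\to 0$ has $F\in\widetilde{GorFlat}$, $K\in\widetilde{GorFlat}^{\perp}$ and $P$ a complex of flat modules, then each $K_n\in\mathcal{GF}^{\perp}$, each degreewise sequence splits because $P_n$ is flat hence Gorenstein flat, so $K_n\in\mathcal{GF}\cap\mathcal{GF}^{\perp}$ is flat and therefore $F$ itself has flat components, i.e. $F$ is N-totally acyclic; a pullback against a $(dw(Flat)\cap\mathcal{E})$-precover then handles an arbitrary complex. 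Note that this argument produces \emph{special} precovers (kernels in $N_{tac}(Flat)^{\perp}$), which your route does not claim.

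The gap in your proposal is that its entire mathematical content sits in the one step you do not carry out. Everything else is either formal or a quotation; the simultaneous closure argument --- enlarging a small subcomplex so that components stay flat with flat quotients, cycles stay Gorenstein flat with Gorenstein flat quotients, and both the subcomplex and the quotient stay exact --- is precisely the hard part, and you defer it with ``this is where the bulk of the verification lies.'' The delicate points you allude to are real: $Z_n(F/F')\cong Z_n(F)/Z_n(F')$ only because $F'$ is exact, flatness of $(F/F')_n$ needs the component enlargements to be pure, and passing to the union of the countable chain needs $\mathcal{GF}$ and the flats to be closed under the relevant direct limits. You also address only the inclusion $\mathcal{A}\subseteq Filt(\mathcal{S})$; the reverse inclusion (closure of $\mathcal{A}$ under transfinite extensions) is easier but must be checked, since otherwise the Filt-precovering theorem gives precovers by a possibly larger class than $\mathcal{A}$.

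Fortunately your route can be completed by citation rather than by hand, from the same source the authors already lean on: \v{S}\v{t}ov\'{\i}\v{c}ek proves that deconstructible classes in a Grothendieck category are closed under set-indexed intersections, and that if $\mathcal{C}$ is a deconstructible class of modules then both the class of complexes with components in $\mathcal{C}$ and the class of exact complexes with cycles in $\mathcal{C}$ are deconstructible in $Ch(R)$ (\cite{stovicek:11:deconstructibility}). Over a noetherian ring both the flat modules and $\mathcal{GF}$ are deconstructible --- the latter because $\mathcal{GF}$ is Kaplansky and closed under direct limits (\cite{enochs:02:kaplansky}), which is exactly how the paper argues in the preceding proposition on Gorenstein flat covers. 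Hence $\mathcal{A}$ is an intersection of two deconstructible classes, so it is deconstructible, hence precovering. With that substitution your argument is complete and is a clean alternative to the paper's; what the paper's approach buys in exchange is the specialness of its precovers.
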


\begin{proof}
- Let $P$ be a complex of flat $R$-modules. Since the class of
$\widetilde{GorFlat}$ complexes is covering, there is an exact
sequence
$$0 \rightarrow K \rightarrow F \rightarrow P \rightarrow 0$$ with
$F \in \widetilde{GorFlat}$ and with $K \in
\widetilde{GorFlat}^\perp $. In
particular, each module $K_n$ is in $\mathcal{GF}^\perp$.\\
For each n we have an exact sequence $$0 \rightarrow K_n \rightarrow
F_n \rightarrow P_n \rightarrow 0$$ Since $P_n$ is flat and $K_n \in
\mathcal{GF}^\perp$, the sequence is split exact. So $K_n$ is a direct
summand of $F_n$, so it is Gorenstein flat. But then $K_n \in
\mathcal{GF} \bigcap \mathcal{GF}^\perp$ gives that $K_n$ is flat for each n.
Therefore $F_n$ is flat for each n. So the complex $F$ is N-totally
acyclic. Also, for each N-totally acyclic complex $D$ we have that
$D$ is in
$\widetilde{GorFlat}$, so $Ext^1(D,K)=0$.\\

- Let $X$ be any complex of $R$-modules. Since the class of exact complexes of flat modules, $dw(Flat) \bigcap
\mathcal{E}$, is precovering (\cite{enochs:11:cotorsion:pairs}, example 2), there is an exact sequence $$0
\rightarrow H \rightarrow P  \rightarrow X \rightarrow 0$$ with $P$
exact complex of flat modules and with $H$ in $(dw(Flat)
\bigcap \mathcal{E})^\perp$. \\
By the above there is an exact sequence $$0 \rightarrow K
\rightarrow F \rightarrow P \rightarrow 0$$ with $F$ an N-totally
acyclic complex of flat modules and $K \in N_{tac} (Flat)^\perp$.\\
We form the commutative diagram

\[
\begin{diagram}
\node{}\node{K}\arrow{s}\arrow{e,=}\node{K}\arrow{s}\\
\node{0}\arrow{e}\node{M}\arrow{s}\arrow{e}\node{F}\arrow{s}\arrow{e}\node{X}\arrow{s,=}\arrow{e}\node{0}\\
\node{0}\arrow{e}\node{H}\arrow{e}\node{P}\arrow{e}\node{X}\arrow{e}\node{0}
\end{diagram}
\]

So we have an exact sequence $$0 \rightarrow M \rightarrow F
\rightarrow X \rightarrow 0$$ with $F$ N-totally acyclic complex of
flat modules. Both $K$ and $H$ are in $N_{tac}(Flat)^\perp$, so $M$
also satisfies $Ext^1(D,M) = 0$ for any N-totally acyclic complex
$D$.
\end{proof}

We recall that over a commutative noetherian ring of finite Krull dimension $d$ every Gorenstein flat module $M$ has finite Gorenstein projective
dimension, and $G.p.d _R(M) \le d$.

We prove now the existence of \textbf{special} Gorenstein projective precovers in $Ch(R)$ over a commutative noetherian ring $R$ of finite Krull dimension.

The proof uses the fact that over such a ring $R$, a complex is Gorenstein projective if and only if it is a complex of Gorenstein projective $R$-modules (\cite{enochs:11:gorenstein complexes}, Theorem 3).

\begin{proposition}
If $R$ is commutative noetherian of finite Krull dimension, then every complex $X$ of $R$-modules has a special Gorenstein projective precover.
\end{proposition}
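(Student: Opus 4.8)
The goal is to produce, for an arbitrary complex $X$, a short exact sequence $0 \to L \to G \to X \to 0$ in which $G$ is a Gorenstein projective complex and $L$ lies in $\mathcal{GP}(C)^\bot$; such a sequence makes $G \to X$ a special Gorenstein projective precover. The plan is to build this from a flat approximation of $X$ and then \emph{correct} that approximation into a Gorenstein projective complex, using finite Krull dimension at the correction step. Two standing facts will drive the argument: first, a complex is Gorenstein projective if and only if each of its components is a Gorenstein projective module (the cited result, valid over commutative noetherian $R$ of finite Krull dimension); second, every complex of finite projective dimension lies in $\mathcal{GP}(C)^\bot$, since $Ext^{\,i}(D,L)=0$ for $i\ge 1$ whenever $D$ is Gorenstein projective and $L$ has finite projective dimension.

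First I would apply the preceding Proposition to obtain an exact sequence $0 \to M \to F \to X \to 0$ with $F$ an N-totally acyclic complex of flat modules and $M \in N_{tac}(Flat)^\bot$. Each component $F_n$ is flat, hence of projective dimension at most $d = \dim R$, and each cycle $Z_n(F)$ is Gorenstein flat, hence of Gorenstein projective dimension at most $d$ by the recalled bound. Via the characterization of Gorenstein projective complexes as the degreewise Gorenstein projective ones, this forces $F$ to have finite Gorenstein projective dimension as a complex.

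Next I would approximate $F$ itself. Because $F$ has finite Gorenstein projective dimension, an Auslander--Buchweitz-type argument produces a special Gorenstein projective precover $0 \to W \to G \to F \to 0$ with $G \in \mathcal{GP}(C)$ and $W$ of finite projective dimension, so $W \in \mathcal{GP}(C)^\bot$. Splicing with the first sequence, the composite $G \to F \to X$ is surjective and its kernel $L$ fits into $0 \to W \to L \to M \to 0$. Since $\mathcal{GP}(C)^\bot$ is closed under extensions and $W \in \mathcal{GP}(C)^\bot$, it remains only to check that $M \in \mathcal{GP}(C)^\bot$; granting this, $L \in \mathcal{GP}(C)^\bot$ and $G \to X$ is the desired special precover.

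The main obstacle is precisely the verification that $M \in \mathcal{GP}(C)^\bot$. The previous Proposition yields only $M \in N_{tac}(Flat)^\bot$, i.e. orthogonality against \emph{exact} (totally acyclic) complexes of flat modules, whereas a Gorenstein projective complex $D$ (being merely degreewise Gorenstein projective) need not be exact, so no comparison of orthogonal classes is immediate. Bridging this gap is where finite Krull dimension does the essential work: for an arbitrary Gorenstein projective complex $D$ one must compute $Ext^1(D,M)$ by resolving $D$ through N-totally acyclic complexes of flat modules modulo a complex of finite projective dimension (using that Gorenstein projectives are Gorenstein flat over $R$ and that the discrepancy is controlled in dimension $\le d$), thereby reducing the required vanishing to $M \in N_{tac}(Flat)^\bot$ together with the finite-projective-dimension Ext-vanishing. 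Making this dimension-shifting bridge precise, and checking it is compatible with the correction $0\to W\to G\to F\to 0$ so that the spliced kernel $L$ genuinely lands in $\mathcal{GP}(C)^\bot$, is the crux of the proof.
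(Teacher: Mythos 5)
Your overall architecture matches the paper's: approximate $X$ by a flat-type complex, correct that approximation using finite Krull dimension to get a Gorenstein projective complex with a kernel of finite projective dimension (your Auslander--Buchweitz step is exactly what the paper constructs explicitly, by comparing a length-$d$ projective resolution with the complete resolution of its $d$-th syzygy), then splice and use extension-closure of $\mathcal{GP}(C)^\bot$. But there is a genuine gap, and you have correctly located it yourself: it is your choice of the \emph{first} approximation. You take the N-totally acyclic precover from the preceding Proposition, so your kernel $M$ lies only in $N_{tac}(Flat)^\bot$. Since $N_{tac}(Flat)$ consists of \emph{exact} complexes of flat modules, while Gorenstein projective complexes are merely degreewise Gorenstein projective and need not be exact, $N_{tac}(Flat)^\bot$ is strictly too weak a condition: orthogonality against exact flat complexes gives no control over $Ext^1(D,M)$ for a non-exact Gorenstein projective $D$ (already $D$ a Gorenstein projective module concentrated in one degree falls outside the reach of your hypothesis). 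The ``dimension-shifting bridge'' you propose to close this is not carried out, and it is not a routine verification; as stated, the proof is incomplete at its decisive point.

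The fix is to use the \emph{other} approximation result from the paper: the class of Gorenstein flat complexes is covering in $Ch(R)$ (over two sided noetherian rings), so by Wakamatsu's lemma $X$ admits an exact sequence $0 \rightarrow Y \rightarrow G \rightarrow X \rightarrow 0$ with $G$ a Gorenstein flat complex and $Y \in \mathcal{GF}(C)^\bot$. This kernel condition, unlike yours, transfers for free: over a commutative noetherian ring of finite Krull dimension every Gorenstein projective complex is a complex of Gorenstein projective modules, these modules are Gorenstein flat, and hence every Gorenstein projective complex is a Gorenstein flat complex; therefore $\mathcal{GF}(C)^\bot \subseteq \mathcal{GP}(C)^\bot$ and $Y \in \mathcal{GP}(C)^\bot$ immediately. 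Your correction step then applies verbatim to $G$ (a Gorenstein flat complex has degreewise finite Gorenstein projective dimension bounded by $d$, yielding $0 \rightarrow L \rightarrow P \rightarrow G \rightarrow 0$ with $P$ Gorenstein projective and $L$ of finite projective dimension), and the pullback/splice gives $0 \rightarrow M \rightarrow P \rightarrow X \rightarrow 0$ with $M$ an extension of $Y$ by $L$, both in $\mathcal{GP}(C)^\bot$. In short: replace the N-totally acyclic precover by the Gorenstein flat cover, and your argument becomes the paper's proof.
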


\begin{proof}

Let $dim$ $R=d$.\\
- We show first that every Gorenstein flat complex $G$ has a special Gorenstein projective precover.\\

Let $$0 \rightarrow \overline{G} \rightarrow P_{d-1} \rightarrow \ldots \rightarrow P_0 \rightarrow G \rightarrow 0$$

be a partial projective resolution of $G$. Then for each $j$ we have an exact sequence of modules
$$0 \rightarrow \overline{G}_j \rightarrow P_{d-1,j} \rightarrow \ldots \rightarrow P_{0,j} \rightarrow G_j \rightarrow 0$$

Since $Gpd$ $G_j \le d$ it follows that each $\overline{G}_j$ is Gorenstein projective. Thus $\overline{G}$ is a Gorenstein projective complex (by \cite{enochs:11:gorenstein complexes}, Theorem 3). So $\overline{G}$ has an exact and $Hom(-,Proj)$ exact complex of projective complexes $$0 \rightarrow \overline{G} \rightarrow T_{d-1} \rightarrow \ldots \rightarrow T_0 \rightarrow \ldots$$\\
Let $T=Ker(T_{-1} \rightarrow T_{-2})$. Then $T$ is a Gorenstein projective complex, and we have a commutative diagram:\\

\[
\begin{diagram}
\node{0}\arrow{e}\node{\overline{G}}\arrow{s,=}\arrow{e}\node{T_{d-1}}\arrow{s}\arrow{e}\node{\cdots}\arrow{e}\node{T_1}\arrow{s}\arrow{e}\node{T_0}\arrow{s}\arrow{e}\node{T}\arrow{s}\arrow{e}\node{0}\\
\node{0}\arrow{e}\node{\overline{G}}\arrow{e}\node{P_{d-1}}\arrow{e}\node{\cdots}\arrow{e}\node{P_1}\arrow{e}\node{P_0}\arrow{e}\node{G}\arrow{e}\node{0}
\end{diagram}
\]

Therefore we have an exact sequence:\\
$$0 \rightarrow T_{d-1} \rightarrow P_{d-1} \oplus T_{d-2} \rightarrow \ldots \rightarrow P_1 \oplus T_0 \rightarrow P_0 \oplus T \xrightarrow{\delta} G \rightarrow 0$$

Let $V=Ker{\delta}$. Then $V$ has finite projective dimension, so $Ext^1(W,V)=0$
for any Gorenstein projective complex $W$.\\
We have an exact sequence $0 \rightarrow V \rightarrow P_0 \oplus T \rightarrow G \rightarrow 0$ with $P_0 \oplus T$ Gorenstein projective and with $V$ of finite projective dimension. Thus $P_0 \oplus T \rightarrow G$ is a special Gorenstein projective precover.\\

- We prove now that every complex $X$ has a special Gorenstein projective precover.\\

Let $X$ be any complex of $R$-modules. By Proposition 10, $X$ has an exact sequence $$0 \rightarrow Y \rightarrow G \rightarrow X \rightarrow 0$$ with $G$ Gorenstein flat and with $Ext^1(U,Y)=0$ for any Gorenstein flat complex $U$.\\

By the above, there is an exact sequence $$0 \rightarrow L \rightarrow P \rightarrow G \rightarrow 0$$ with $P$ Gorenstein projective and with $L$ complex of finite projective dimension.\\
Form the pullback diagram

\[
\begin{diagram}
\node{}\node{L}\arrow{s}\arrow{e,=}\node{L}\arrow{s}\\
\node{0}\arrow{e}\node{M}\arrow{s}\arrow{e}\node{P}\arrow{s}\arrow{e}\node{X}\arrow{s,=}\arrow{e}\node{0}\\
\node{0}\arrow{e}\node{Y}\arrow{e}\node{G}\arrow{e}\node{X}\arrow{e}\node{0}
\end{diagram}
\]

Since $L \in {GorProj}^{\perp}$ and $Y \in GorFlat^{\perp}$ and the sequence $0 \rightarrow L \rightarrow M \rightarrow Y \rightarrow 0$ is exact, it follows that $M \in {GorProj}^{\perp}$.\\
So $0 \rightarrow M \rightarrow P \rightarrow X \rightarrow 0$ is exact with $P$ Gorenstein projective and with $M \in {GorProj}^{\perp}$

\end{proof}

\bibliographystyle{plain}

\begin{thebibliography}{1}

%\bibitem{avramov:91:homological}
%L.~Avramov and H.-B. Foxby.
%\newblock Homological dimensions of unbounded complexes.
%\newblock {\em J. Pure Appl. Algebra}, (71):129--155, 1991.

%\bibitem{avramov:86:through}
%L.L. Avramov and S.~Halperin.
%\newblock {Through the looking glass: A dictionary between rational homotopy
%  theory and local algebra}.
%\newblock {\em Lecture Notes in Math.}, 1183:1--27, 1986.

\bibitem{enochs:95:gorenstein2}
D. Bennis and N. Mahdou.
\newblock {Strongly Gorenstein projective, injective and flat modules}.
\newblock {\em {J. Pure Appl. Algebra}}, (210):1709--1718, 2007.

\bibitem{enochs:11:cotorsion:pairs}
D. Bravo and E.E. Enochs and A. Iacob and O.M.G. Jenda and J. Rada.
\newblock {Cotorsion pairs in C(R-Mod)}.
\newblock {\em {Rocky Mountain J. Math.}}, to appear.

\bibitem{christensen:06:gorenstein}
L.W. Christensen, A. Frankild, and H. Holm
\newblock {On Gorenstein projective, injective and flat dimensions - A
functorial description with applications}.
\newblock {\em J. Algebra}, 302(1): 231--279, 2006.







\bibitem{christensen:11:beyond}
L.W. Christensen, H.B. Foxby, and H. Holm
\newblock {Beyond totally reflexive modules and back. A survey on Gorenstein
dimensions}.
\newblock {book chapter in "Commutative Algebra: Noetherian and non-Noetherian
perspectives"}, 101-143.
\newblock {Springer-Verlag, 2011}.

\bibitem{eklof:97:homological}
P. Eklof.
\newblock {Homological algebra and set theory}.
\newblock {\em {Trans. American Math. Soc.}}, (227):207--225, 1977.


\bibitem{ET}
P.\ Eklof, J.\ Trlifaj,
\newblock{ How to make Ext vanish}.
\newblock {\em {Bull.\ Lond.\ Math.\ Soc.}}
(33):41--51, 2001.


\bibitem{enochs:81:existence}
E.E. Enochs.
\newblock Injective and flat covers and resolvents.
\newblock{\em Israel J. Math.} 39:189--209, 1981.


\bibitem{enochs:11:filt}
E.E. Enochs.
\newblock {\em Shortening filtrations}.
\newblock {\em {Science China Math.}},  (55):687--693, 2012.



\bibitem{EEI}
E.E. Enochs and S. Estrada and A. Iacob.
\newblock {Cotorsion pairs, model structures and homotopy categories}.
\newblock {\em {Houston Journal of Mathematics, to appear}}.

\bibitem{enochs:11:gorenstein complexes}
E.E. Enochs and S. Estrada and A. Iacob.
\newblock {Gorenstein projective and flat complexes over noetherian rings}.
 \newblock {E.E. Enochs and S. Estrada and A. Iacob},
 \newblock {\em Mathematische Nachrichten}, 7:834-851, 2012.




\bibitem{enochs:95:gorenstein}
E.E. Enochs and O.M.G. Jenda.
\newblock {Gorenstein injective and projective modules}.
\newblock {\em {Mathematische Zeitschrift}}, (220):611--633, 1995.


\bibitem{enochs:00:relative}
E.E. Enochs and O.M.G. Jenda.
\newblock {\em Relative Homological Algebra}.
\newblock Walter de Gruyter, 2000.
\newblock De Gruyter Exposition in Math.




\bibitem{enochs:02:gor.flat.covers}
E.E. Enochs, and O.M.G. Jenda, and J.A. L\'{o}pez-Ramos
\newblock The existence of Gorenstein flat covers.
\newblock {\em Math. Scand.}, 94:46--62, 2004.




\bibitem{enochs:93:gorflat}
E.E. Enochs and O.M.G. Jenda and B. Torrecillas.
\newblock Gorenstein flat modules.
\newblock{ \em Journal Nanjing University} 10:1--9, 1993.

\bibitem{enochs:12:gorenstein.injective.covers}
E.E. Enochs and A. Iacob.
\newblock {Gorenstein injective covers and envelopes over commutative noetherian rings}.
\newblock {\em {submitted}}.


\bibitem{enochs:02:kaplansky}
E.E. Enochs, and J.A. L\'{o}pez-Ramos
\newblock Kaplansky classes.
\newblock {\em Rend. Sem. Univ. Padova}, 107:67--79, 2002.







%\bibitem{bican:01:flatcovers}
%L. Bican, R. El Bashir, and E.E. Enochs
%\newblock All modules have flat covers.
%\newblock {\em Bulletin London Math. Soc.}, 33(4):385 -- 390, 2001.


%\bibitem{enochs:93:gorflat}
%E.E. Enochs, O.M.G. Jenda, and B. Torrecillas
%\newblock Gorenstein flat modules.
%\newblock {\em J. Nanjing Univ.}, 10:1--9, 1993.


%\bibitem{enochs:96:orthogonality}
%E.E. Enochs, O.M.G. Jenda, and J.~Xu.
%\newblock Orthogonality in the category of complexes.
%\newblock {\em Math. J. Okayama Univ.}, 38:25--46, 1996.





\bibitem{garcia:99:covers}
J.R.~Garc{\'i}a Rozas.
\newblock {\em Covers and evelopes in the category of complexes of modules}.
\newblock CRC Press LLC, 1999.

\bibitem{gillespie:08:cotorsion}
J. Gillespie.
\newblock Cotorsion pairs and degreewise homological model structures.
\newblock {\em Homotopy, homology and applications}, 10(1):283-304, 2008.



\bibitem{gillespie:12:gorenstein complexes}
J. Gillespie.
\newblock Gorenstein complexes and recollements from cotorsion pairs.
\newblock {\em preprint, arXiv. 1210.0196}.


\bibitem{gillespie:07:kaplansky}
J. Gillespie.
\newblock Kaplansky classes and derived categories.
\newblock {\em Math. Z.}, 257(4):811--843, 2007.




%\bibitem{iacob:07:direct}
%A.~Iacob and S.~B. Iyengar.
%\newblock {Homological dimensions and reqular rings}.
%\newblock { \em Journal of Algebra}, 322 (10):3451--3458, 2009.





%\bibitem{holm:04:gordim}
%H.~Holm.
%\newblock Gorenstein homological dimensions.
%\newblock { \em J.Pure and Applied Algebra} 189:167--193, 2004.


%\bibitem{holm:10:gorensteinanalogue}
%H.~Holm and P.~J{\o}rgensen.
%\newblock Rings without a Gorenstein analogue of the Govorov-Lazard theorem.
%\newblock { \em  Quarterly J. of Math.} to appear.



\bibitem{holm:04:gor.homol.dim}
H. Holm.
\newblock Gorenstein homological dimensions.
\newblock {\em Journal of pure and applied algebra}, 189:167-193, 2004.


\bibitem{holm:09:cotorsion.pais}
H. Holm and P. J{\o}rgensen.
\newblock {Cotorsion pairs induced by duality pairs}.
\newblock {\em {Journal of Commutative Algebra}}, (1):621--633, 2009.


\bibitem{jorgensen:07:gorproj}
P.~J{\o}rgensen.
\newblock Existence of Gorenstein projective resolutions and Tate cohomology.
\newblock { \em J. Eur. Math. Soc.} 9:59--76, 2007.


\bibitem{liu:09:gorinj}
Z. Liu and C. Zhang.
\newblock Gorenstein injective complexes of modules over noetherian rings.
\newblock { \em J. Algebra} 321:1546--1554, 2009.


\bibitem{murfet:10:totally}
D. Murfet and S. Salarian.
\newblock Totally acyclic complexes over noetherian schemes.
\newblock { \em preprint}, arXiv:0902.3013v1 [math.AG], 2009.


\bibitem{stovicek:11:deconstructibility}
J. \v S\v tov\'{i}\v cek.
\newblock Deconstructibility and the Hill lemma in Grothendieck categories.
\newblock {Forum Math.}, 25:193-219, 2013.

%\bibitem{xu:96:flatcovers}
%J.~Xu.
%\newblock {\em Flat covers of modules}.
%\newblock { Lecture Notes in Math.} Vol. 1634 Springer, Berlin, 1996.

\end{thebibliography}

\end{document}